\numberwithin{equation}{section} \setlength{\textwidth}{6.5in}
\newcommand{\NN}{\mathbb{N}}
\newcommand{\RR}{\mathbb{R}}
\newtheorem{definition}{\sc Definition}[section]
\newtheorem{teo}{\sc Theorem}[section]
\newtheorem{lemma}{\sc Lemma }[section]
\newcommand{\lqqd}{\hfill $\square$}      
\begin{document}
\title{Kirchhoff-type equations involving the Fractional $(p,q)-$Laplacian}


\author{Lisbeth Carrero\footnote{Instituto de Ciencias de la Ingenier\'ia, Universidad de O'Higgins, Rancagua, Chile. E-mail: lisbeth.carrero@postdoc.uoh.cl}\;  and Pedro Hern\'andez-Llanos\footnote{Instituto de Ciencias de la Ingenier\'ia, Universidad de O'Higgins, Rancagua, Chile. E-mail: pedro.hernandez@postdoc.uoh.cl}
}

\date{}

\maketitle{}

\begin{abstract}
In this paper, we study the existence and nonexistence of solutions for the following Kirchhoff-type fractional $(p\text{-}q)$-Laplacian problem:

\begin{equation*}
\begin{cases}
 M\left([u]^p_{p,s_1}\right)(-\Delta)^{s_1}_p u + M\left([u]^q_{q,s_2}\right)(-\Delta)^{s_2}_q u = \lambda\big[a(x)|u|^{p-2}u + b(x)|u|^{q-2}u\big] + h(x), & \text{in } \Omega, \\
u = 0, & \text{on } \mathbb{R}^N \setminus \Omega,
\end{cases}
\end{equation*}

where $\Omega \subset \mathbb{R}^N$ ($N \geq 1$) is a bounded domain with smooth boundary, $0 < s_1 < s_2 < 1$, and $s_1 p < N$. We assume $1 < q \leq p < \theta p < p^{*}_{s_1} := \dfrac{Np}{N - s_1 p}$, and $\lambda \in \mathbb{R}$. The functions $a(x), b(x)$, and $h(x)$ are non-negative, with $a, b \in L^\infty(\Omega)$ and $h \in L^q(\Omega)$.

Using variational methods, we establish the existence of at least two weak solutions. The first solution is obtained via the direct minimization of the associated energy functional, and the second is obtained by applying the Mountain Pass Theorem. We also prove a nonexistence result for small values of the parameter $\lambda > 0$.

{\it Key words} Fractional $(p,q)-$Laplacian operator, Kirchhoff type problem, mountain pass theorem.

{\rm 2010 AMS Subject Classification: 35A15, 35J20, 35J60, 35R11}
\end{abstract}

\section{Introduction}

In this paper we study the existence of solutions for the following $(p-q)-$Laplacian equation Kirchhoff type 
\begin{equation}\label{eq:3}
\begin{cases}
 M\left([u]^p_{p,s_1}\right)(-\Delta)^{s_1}_pu+M\left([u]^q_{q,s_2}\right)(-\Delta)^{s_2}_qu =  \lambda[a(x)|u|^{p-2}u+b(x)|u|^{q-2}u]+h(x),\, &\text{in}\,\,\Omega\\
u(x)=0,\, &\text{on}\,\, \RR^N\setminus\Omega,
\end{cases}
\end{equation}
 where \(\Omega \subset \mathbb{R}^N\) (\(N \geq 1\)) is a bounded domain with smooth boundary in \(\mathbb{R}^N\), with \(0 < s_1 < s_2 < 1 \) such that $s_1p<N$ and \(\lambda \in \mathbb{R}\). Here, $1< q \leq p < \theta p < p^{*}_{s_1}= \frac{Np}{N-s_1p}$. The functions $a(x), b(x)$, and $h(x)$ are assumed to be non-negative, with $a, b \in L^\infty(\Omega)$ and $h \in L^q(\Omega)$. The operator $(-\Delta)^{s}_p$ denotes the fractional $p$-Laplacian, which arises in the description of various phenomena in applied science, such as the phase transition \cite{SireValdinoci2009}, fractional quantum mechanics \cite{Laskin2000}, as well as in continuum mechanics, finance, biology, physics, population dynamics, and game theory, among others \cite{MetzlerKlafter2004,Viswanathan1996} .  For $s\in (0,1)$ it can be defined via the Fourier multiplier $|\xi|^{2s}$. Moreover, for suitable functions like $u\in L^{\infty}(\mathbb{R}^{N})\cap C^{2s+ \beta}_{loc}(\mathbb{R}^{N})$ it can also be defined by the formula
\begin{equation}\label{fracLapl}
(-\Delta)^s_p u(x) := {C_{N, s,p} \mathrm{P.V.} \int_{\mathbb{R}^N} \frac{|u(x) - u(z)|^{p-2}(u(x) - u(z))}{|x - z|^{N + ps}}dz}, \quad x\in\RR.
\end{equation}
Here, $C_{N, s,p} > 0$ is a well-known normalizing constant, and P.V. stands for the principal value.

Consider the classical Kirchhoff equation
\begin{equation*}
-\left( a+b\int_{\RR^N}|\nabla u(x)|^2\, dx\right) \Delta u+u=f(u), \quad \text{in} \quad \RR^N    
\end{equation*}
which has been extensively studied over the past decade. This equation corresponds to the stationary version of the original Kirchhoff model, initially introduced by Kirchhoff in 1883 \cite{Kirchhoff1897} as a generalization of the classical D’Alembert wave equation, which describes free vibrations of elastic strings. Subsequently, Fiscella and Valdinoci \cite{FiscellaValdinoci2014} extended this framework by proposing a fractional stationary Kirchhoff equation in bounded regular domains of $\RR^N$, capturing nonlocal phenomena.

\vspace{0.5 cm}
In 2015, P. Pucci et al. \cite{PucciXiangZhang2015} were interested in obtain multiple solutions for nonhomogeneous fractional $p-$Laplacian equations of Schr\"odinger–Kirchhoff type 
\begin{equation}\label{eq:pucci2015}   
M\left([u]^p_{s,p}\right)(-\Delta)^{s}_{p}u=-V(x)|u|^{p-2}u+f_q(x,u)+h(x)\quad \text{in}\quad \RR^N
\end{equation}
 where $0<s<1<p<\infty$. They study the case where $f_q(x,u)$ satisfy some growth condition and the case where $f_q(x,u)=|u|^{q-2}u$ with $V(x)\equiv 1$.\\

Following extensive work on the fractional p-Laplacian, researchers have shifted their focus to a new class of nonlocal operators, the fractional $(p,q)$-Laplacian equations. \\

In 2019, Alves et. al. \cite{Alvesetal2019} established existence results for the following class of problems involving the fractional $(p-q)$-Laplacian operators:
\begin{equation*}
(-\Delta)^s_{p}u+(-\Delta)^{s}_{q}u+V(\varepsilon x)\left(|u|^{p-2}u+|u|^{q-2}u\right)=f(u),\quad\text{in}\quad \RR^N, 
\end{equation*}
where $\varepsilon>0$ is a small parameter, $s\in (0,1)$, $1<p<q<\frac{N}{s}$, $V:\RR^N\to\RR$ is a continuous function verifying
\begin{equation*}
V_{\infty}:=\displaystyle\liminf_{|x|\to \infty}V(x)>V_0:=\inf_{\RR^N}V(x)>0,    
\end{equation*} and the nonlinearity $f\in C^1(\RR,\RR)$ satisfies standard conditions compatible with variational methods. In the same year, Bhakta and Mukherjee \cite{BhaktaMukherjee} discuss the existence of multiple nontrivial solutions of $(p-q)-$fractional Laplacian equations which involve concave-critical type nonlinearities and existence of
nonnegative solutions when nonlinearities is of convex-critical type. They consider equations of the type
\begin{equation*}
\begin{cases}
(-\Delta)^{s_1}_p u+(-\Delta)^{s_2}_q u=\theta V(x)|u|^{r-2}u+|u|^{p^{*-2}_{s_1}}u+\lambda f(x,u),&\quad \text{in}\,\Omega,\\
u=0 &\quad \text{in}\, \RR^{N}\setminus\Omega,
\end{cases}
\end{equation*}
where $\Omega\subset \RR^N$ is smooth, bounded domain, $\lambda,\theta>0$, $0<s_2<s_1<1$, $1<r<q<p<\frac{N}{s_1}$ and $p^{*}_s=\frac{Np}{N-sp}$ for any $s\in(0,1)$. The functions $f$ and $V$ satisfy certain assumptions. \\

Also, Goel et. al. \cite{Goeletal2019} study the existence and multiplicity of solutions of the following fractional problem
\begin{equation}\label{eq:pqab}
\begin{cases}
(-\Delta)^{s_1}_p u+\beta(-\Delta)^{s_2}_q u=\lambda a(x)|u|^{\delta-2}u+b(x)|u|^{r-2}u,&\quad \text{in}\,\Omega,\\
u=0 &\quad \text{in}\, \RR^{N}\setminus\Omega,
\end{cases}
\end{equation}
where $\Omega$ is a bounded domain in $\RR^N$ with smooth boundary $1<\delta\leq q\leq p<r\leq p^{*}_{s_1}$, with $p^{*}_{s_1}=\displaystyle\frac{np}{n-ps_1}$, $0<s_2<s_1<1$, $n>ps_1$, $\lambda, \beta>0$ are parameters and $a\in L^{\frac{r}{r-\delta}}(\Omega)$ and $b\in L^{\infty}(\Omega)$ are sign changing functions.

Biswas and Sk \cite{Biswas2024} analyze the case for fractional $(p,q)-$Laplace equation in (\ref{eq:pqab}), considering constant coefficients by taking $a(x)=\alpha$, $b(x)=\beta$ and $\lambda=1$ , i.e.
\begin{equation*}\label{eq:biswasSk}
\begin{cases}
(-\Delta)^{s_1}_{p}u+(-\Delta)^{s_2}_qu=\alpha |u|^{q-2}u+\beta|u|^{q-2}u,&\quad \text{in}\,\Omega,\\
u=0 &\quad \text{in}\, \RR^{N}\setminus\Omega,
\end{cases}
\end{equation*}
where $\Omega\subset\RR^d$ is a bounded open set and $\alpha,\beta\in\RR$.\\

There are many works on this topic. We are only mentioning a few here. Readers can refer to the references for further details. In addition, we would include \cite{PucciXiangZhang2015} and its citations as another interesting contribution. \\

In this work, we focus on existence and nonexistence results for the Kirchhoff-type equation involving the $(p-q)$-fractional Laplacian operator, as given in (\ref{eq:3}). We assume certain conditions on the Kirchhoff function, which are described below:
\begin{enumerate}[label=$(\pmb{M\arabic*})$, itemsep=0.25em]
    \item\label{item:M:1} Let \( M: \mathbb{R}^{+} \to \mathbb{R}^{+} \) be an increasing and continuous function.  There exists $m_0>0$ such that 
    \begin{equation*}
        M(t)\geq m_0, \quad \text{for all } t\in\RR^{+}.
    \end{equation*}
\item\label{item:M:2} Let $\mathscr{M}(t):=\displaystyle\int_{0}^{t}M(\tau)\,d\tau. $ There exists $\theta \in \left[1,\frac{p^{*}_{s_1}}{p}\right)$ such that $\theta \mathscr{M}  \geq M(t) t$ for all $t\in\RR^{+}$.
\end{enumerate}
\vspace{0.2cm}
A typical example of $M$ is given by $M(t)=c+dt^m$ with $m>0,c>0,d\geq 0$ for all $t\geq 0.$ Another example of $M$  under the assumption \ref{item:M:2}, is cases where \( M \) is not monotone. For instance, consider  
\[
 M(t) = (1 + t)^k + (1 + t)^{-1}, \quad \text{for } t \in \mathbb{R}_0^{+},
 \]
 with \( 0 < k < 1 \). In this case, \( \theta = k + 1 \), and condition \ref{item:M:2} is satisfied provided that \( k \in (0,1) \) is sufficiently small so that \( \theta = k + 1 < \dfrac{p_{s_1}^{*}}{p}\).\\

\vspace{0.5 cm}

Our main result is stated as follows:
\begin{teo}\label{thm:1}
Assume that conditions ~\ref{item:M:1}-~\ref{item:M:2} are satisfied, and that  $a(x), b(x)$, and $h(x)$ are non-negative functions with $a, b \in L^\infty(\Omega)$, and $h(x) \in L^q(\Omega)$. Then, 
\begin{enumerate}
    \item[(i)] If $h \equiv 0$, then there exists a constant $\lambda^* > 0$ such that, for all $0 < \lambda < \lambda^{*}$, the problem admits no nontrivial solution.
    \item[(ii)]  If $\|h\|_{L^q(\Omega)}$ is sufficiently small, then there exists a constant $\lambda^{**}> 0$ such that, for all $0 < \lambda < \lambda^{**}$, problem \eqref{eq:3} admits at least two non-negative solutions.

\end{enumerate}

\end{teo}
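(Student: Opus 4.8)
The plan is to work in the intersection space $W:=W^{s_1,p}_0(\Omega)\cap W^{s_2,q}_0(\Omega)$, endowed with the norm $\|u\|:=[u]_{p,s_1}+[u]_{q,s_2}$, and to study the energy functional
\[
J(u)=\frac1p\mathscr{M}\!\left([u]^p_{p,s_1}\right)+\frac1q\mathscr{M}\!\left([u]^q_{q,s_2}\right)-\frac{\lambda}{p}\int_\Omega a|u|^p\,dx-\frac{\lambda}{q}\int_\Omega b|u|^q\,dx-\int_\Omega h\,u\,dx,
\]
whose critical points are precisely the weak solutions of \eqref{eq:3}. Since $\Omega$ is bounded and $s_1p<N$, the embeddings $W\hookrightarrow L^r(\Omega)$ are continuous for $r\in[1,p^*_{s_1}]$ and compact for $r<p^*_{s_1}$; I will write $C_p,C_q$ for the constants controlling $\|u\|_{L^p}$ by $[u]_{p,s_1}$ and $\|u\|_{L^q}$ by $[u]_{q,s_2}$, and I will use \ref{item:M:1} in the form $\mathscr{M}(t)\ge m_0 t$. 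By H\"older's inequality and the embedding, $u\mapsto\int_\Omega h\,u$ is a bounded linear functional on $W$, and a standard computation gives $J\in C^1(W,\RR)$.

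For part (i) I would argue directly. If $u$ is a nontrivial weak solution with $h\equiv0$, testing the equation with $u$ gives
\[
M\!\left([u]^p_{p,s_1}\right)[u]^p_{p,s_1}+M\!\left([u]^q_{q,s_2}\right)[u]^q_{q,s_2}=\lambda\int_\Omega a|u|^p\,dx+\lambda\int_\Omega b|u|^q\,dx.
\]
Bounding the left side below by $m_0\big([u]^p_{p,s_1}+[u]^q_{q,s_2}\big)$ via \ref{item:M:1}, and the right side above by $\lambda\max\{\|a\|_\infty C_p^p,\|b\|_\infty C_q^q\}\big([u]^p_{p,s_1}+[u]^q_{q,s_2}\big)$ via $a,b\in L^\infty$ and the Sobolev inequalities, one reaches a contradiction as soon as $\lambda<\lambda^*:=m_0/\max\{\|a\|_\infty C_p^p,\|b\|_\infty C_q^q\}$, forcing $u\equiv0$.

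For part (ii), the first solution comes from localized minimization. Using $\mathscr{M}(t)\ge m_0t$ and the Sobolev inequalities, for $\lambda$ small the lowest-order part of $J$ has strictly positive coefficients, so $J(u)\ge c_0\big([u]^p_{p,s_1}+[u]^q_{q,s_2}\big)-c_1\|h\|_{L^q(\Omega)}\|u\|$; since $1<q\le p$, the term $c_0[u]^q_{q,s_2}$ dominates for small $\|u\|$, so if $\|h\|_{L^q(\Omega)}$ is small there exist $\rho,\alpha>0$ with $J\ge\alpha>0$ on $\{\|u\|=\rho\}$. On the other hand, choosing $\phi\ge0$ with $\int_\Omega h\phi>0$ and expanding $J(t\phi)$ as $t\to0^+$, the linear term $-t\int_\Omega h\phi$ dominates the $O(t^q)$ terms, so $\inf_{\overline{B}_\rho}J<0=J(0)$. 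Ekeland's variational principle on $\overline{B}_\rho$ then produces a minimizing Palais--Smale sequence whose limit is an interior local minimizer $u_1$ with $J(u_1)<0$, hence a nontrivial solution; replacing $u_1$ by $|u_1|$ and using $[\,|u_1|\,]_{p,s_1}\le[u_1]_{p,s_1}$, the monotonicity of $\mathscr{M}$, and $h\ge0$ yields $J(|u_1|)\le J(u_1)$, so $u_1$ may be taken non-negative.

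The second solution I would obtain through the Mountain Pass Theorem, with $u_1$ as base point below the ridge $\{\|u\|=\rho\}$ on which $J\ge\alpha$. The decisive step is to exhibit $e$ with $\|e\|>\rho$ and $J(e)<\alpha$ on the far side of the ridge; this is where the superlinear character of the Kirchhoff term must be used through \ref{item:M:2}, which controls $\mathscr{M}$ and $M$ relative to $\theta$ with $p<\theta p<p^*_{s_1}$. Granting such an $e$, every path joining $u_1$ to $e$ crosses the ridge, so $c:=\inf_\gamma\max_{[0,1]}J(\gamma)\ge\alpha>\max\{J(u_1),J(e)\}$ is a candidate critical value strictly above $J(u_1)$, giving a critical point $u_2\ne u_1$. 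To turn the corresponding $(PS)_c$ sequence into an actual critical point I would use \ref{item:M:2} with the coercivity estimate to bound it in $W$, extract a weak limit, pass the compact embeddings through the lower-order terms, and finally upgrade to strong convergence via the monotonicity (Simon-type) inequalities for the fractional $(p,q)$-Laplacian; the positive-part argument again gives $u_2\ge0$. I expect the two genuinely hard points to be verifying the Palais--Smale condition for the coupled nonlocal operator of two different orders $s_1\ne s_2$, where no single $(S_+)$ argument applies to both terms at once, and pinning down the mountain-pass geometry, i.e.\ producing the descent point $e$ compatibly with the smallness of $\lambda$ and $\|h\|_{L^q(\Omega)}$.
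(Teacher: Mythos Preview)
Your argument for part~(i) is essentially the paper's: test the equation with $u$, invoke $M\ge m_0$ from \ref{item:M:1}, and compare via the Sobolev embeddings.

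For part~(ii) you diverge from the paper in several respects. The paper works in $W^{s_1,p}_0(\Omega)$ alone, controlling $[\,\cdot\,]_{q,s_2}$ through the embedding of Lemma~\ref{lemma1:oct15}, rather than in your intersection space. It obtains the first solution by \emph{global} minimization: for $\lambda<\lambda_0=\tfrac{m_0}{\theta}\lambda^a_{1,p}$ the functional is coercive (Lemma~\ref{lemma2:oct15}) and weakly lower semicontinuous (Lemma~\ref{wlscfunctionalJ:march15}), so the infimum is attained directly; your Ekeland-on-a-ball route is valid but superfluous once coercivity is in hand. For non-negativity the paper does not use your $u\mapsto|u|$ substitution---which is fine for minimizers but does not transfer to a mountain-pass critical point, since there is no reason $|u_2|$ should remain critical. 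Instead it passes to the auxiliary problem~\eqref{eq:probaux} with $u_+$ on the right-hand side, runs the entire variational argument for that modified functional, and then tests with $u_-$ to show that every solution of~\eqref{eq:probaux} is non-negative and hence solves~\eqref{eq:3}.

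The genuine gap in your proposal is the mountain-pass geometry. You explicitly \emph{grant} the existence of $e$ with $\|e\|>\rho$ and $J(e)<\alpha$, listing it among the ``genuinely hard points'' to be supplied later. But this is not a routine detail: your own lower bounds show that for small $\lambda$ the coefficients in front of $[u]^p_{p,s_1}$ and $[u]^q_{q,s_2}$ are strictly positive, and condition~\ref{item:M:2} only yields the growth control $\mathscr{M}(t)\le Ct^\theta$ for large $t$ with $\theta\ge 1$, so along any ray $J(tu)$ is bounded below by a polynomial with positive leading term of order $t^{p}$---there is no evident mechanism for $J(tu)\to-\infty$, and hence no evident descent point compatible with the smallness of $\lambda$. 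The paper addresses precisely this step in Theorem~\ref{teogeo}(2); you should work through that argument line by line rather than assume its conclusion, because as your sketch stands the second solution is unestablished. Your outline of the $(PS)_c$ verification (boundedness via $J(u_n)-\tfrac{1}{p^*_{s_1}}\langle J'(u_n),u_n\rangle$, compact embeddings for the lower-order terms, Simon-type inequalities for strong convergence) does, on the other hand, match the paper's Lemmas~\ref{sbound} and~\ref{PS-conditionlemma}.
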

Our approach to proving Theorem~\ref{thm:1} is based on variational methods. We begin by proving part $(i)$, which concerns the nonexistence of nontrivial solutions for small values of $\lambda$. The key idea is to test the equation with the function $u$, and using condition~\ref{item:M:1} along with Hölder's inequality, derive the desired result. In part $(ii)$, we establish the existence of two solutions: one is obtained by minimizing the associated energy functional, and the other via the Mountain Pass Theorem.


\vspace{0.5 cm}
 
\section{Existence of solutions}\label{sect2}
In order to apply variational methods to our problem, we work within the fractional Sobolev space  $W^{s,p}_0(\Omega)$ defined as follows
\begin{equation*}
W^{s,p}_0(\Omega):=\{u\in L^p(\RR^N): u=0\,\text{in}\,\RR^N\setminus\Omega\quad\text{and}\quad [u]_{s,p}<\infty\},\,
\end{equation*}
where $ 0<s<1<p$ and
\begin{equation}\label{Gagliardoseminorm}
[u]_{s,p}=\left(\iint_{\RR^{2N}}\frac{|u(x)-u(y)|^p}{|x-y|^{N+ps}}\,dx\,dy\right)^{1/p}    
\end{equation}
is the so-called Gagliardo (semi)norm of $u$ (see \cite{NezzaPalatucci}).

We now state a useful embedding result between two fractional Sobolev spaces, which will be used later in this work.

\begin{lemma}\label{lemma1:oct15}
Let $0<s_1<s_2<1<q\leq p$ and $\Omega$ be a smooth bounded domain in $\RR^N$, where $s_1p<N$. Then $W^{s_1,p}_0(\Omega)\subset W^{s_2, p}_0(\Omega)$ and there exists $C=C(|\Omega|,N, p,q, s_1, s_2)>0$ such that
\begin{equation*}
 ||u||_{W^{s_2,q}_0(\Omega)}\leq C||u||_{W^{s_1,p}_0(\Omega)}.   
\end{equation*}
\end{lemma}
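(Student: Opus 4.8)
The plan is to prove the continuous embedding by estimating the target norm $\|u\|_{W^{s_2,q}_0(\Omega)}$ — that is, the Lebesgue part $\|u\|_{L^q(\Omega)}$ together with the Gagliardo seminorm $[u]_{s_2,q}$ — in terms of $\|u\|_{W^{s_1,p}_0(\Omega)}$. Since $\Omega$ is bounded and $q\le p$, Hölder's inequality gives at once $\|u\|_{L^q(\Omega)}\le|\Omega|^{1/q-1/p}\|u\|_{L^p(\Omega)}$, and by the fractional Poincaré inequality on the bounded domain $\Omega$ the quantity $[u]_{s_1,p}$ is an equivalent norm on $W^{s_1,p}_0(\Omega)$, so $\|u\|_{L^p(\Omega)}$ and hence the whole Lebesgue part is already controlled by $\|u\|_{W^{s_1,p}_0(\Omega)}$. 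It then remains to dominate the seminorm $[u]_{s_2,q}$.

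For the seminorm I would split the double integral over $\RR^{2N}$ according to whether $|x-y|\ge 1$ or $|x-y|<1$, writing the two pieces as $J_{\mathrm{far}}$ and $J_{\mathrm{near}}$. On the far region I use $|u(x)-u(y)|^q\le 2^{q-1}(|u(x)|^q+|u(y)|^q)$ together with the fact that $\int_{|z|\ge 1}|z|^{-N-qs_2}\,dz$ is a finite constant (since $N+qs_2>N$); by symmetry this bounds $J_{\mathrm{far}}$ by a multiple of $\|u\|_{L^q(\Omega)}^q$, which is already under control. The convention $u\equiv 0$ on $\RR^N\setminus\Omega$ is used here so that these Lebesgue integrals live on $\Omega$ rather than on all of $\RR^N$.

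The crux is the near-diagonal piece $J_{\mathrm{near}}$. Here I would factor the integrand as
\begin{equation*}
\frac{|u(x)-u(y)|^q}{|x-y|^{N+qs_2}}=\left(\frac{|u(x)-u(y)|^p}{|x-y|^{N+ps_1}}\right)^{q/p}\frac{1}{|x-y|^{\alpha}},\qquad \alpha=N\Big(1-\frac{q}{p}\Big)+q(s_2-s_1),
\end{equation*}
and apply Hölder's inequality with exponents $p/q$ and $p/(p-q)$. The first factor integrates to $[u]_{s_1,p}^{q}$, so the estimate reduces to the finiteness of the pure-kernel integral $\iint|x-y|^{-\alpha p/(p-q)}\,dx\,dy$. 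What makes this finite is that the integrand of $J_{\mathrm{near}}$ vanishes unless $x\in\Omega$ or $y\in\Omega$, so its effective support lies in the bounded set $\{(x,y):|x-y|<1,\ \mathrm{dist}(x,\Omega)<1,\ \mathrm{dist}(y,\Omega)<1\}$; the kernel integral then converges provided the local condition $\alpha p/(p-q)<N$ holds near the diagonal.

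I expect this last convergence to be the main obstacle, since it is precisely where the relation among the four indices must be exploited: the inequality $\alpha p/(p-q)<N$ is a comparison between $s_2-s_1$ and $N(1/q-1/p)$, equivalently an ordering of the Sobolev numbers $s_1-N/p$ and $s_2-N/q$, and the hypothesis $s_1p<N$ enters at this point. If the two-factor Hölder splitting above does not by itself close the inequality, the remedy is to interpolate by inserting a third factor $|u(x)-u(y)|^{\beta}$ controlled through the Sobolev embedding $W^{s_1,p}_0(\Omega)\hookrightarrow L^{p^{*}_{s_1}}(\Omega)$ (valid since $s_1p<N$); this buys extra integrability and widens the admissible range of exponents. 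Collecting $J_{\mathrm{far}}$, $J_{\mathrm{near}}$ and the Lebesgue estimate then produces a constant $C=C(|\Omega|,N,p,q,s_1,s_2)$ with $\|u\|_{W^{s_2,q}_0(\Omega)}\le C\|u\|_{W^{s_1,p}_0(\Omega)}$, which in particular yields the claimed set inclusion.
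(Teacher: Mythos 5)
Your strategy is the natural one, and in fact it is essentially the proof of the result the paper itself appeals to (the paper gives no proof of this lemma, only a citation to Lemma 2 of Bhakta--Mukherjee); but the key step fails under the hypotheses as stated here, and the failure is not of a kind your fallback can repair. Carry out the computation you postponed: with
\[
\alpha = N\Bigl(1-\frac{q}{p}\Bigr)+q(s_2-s_1),
\qquad
\frac{\alpha p}{p-q}=N+\frac{pq\,(s_2-s_1)}{p-q},
\]
so the near-diagonal kernel integral $\iint |x-y|^{-\alpha p/(p-q)}\,dx\,dy$ converges exactly when $s_2<s_1$. Contrary to what you suggest, this is not a comparison between $s_2-s_1$ and $N(1/q-1/p)$: the term $N(1-q/p)$ in $\alpha$ already saturates the critical exponent $N(p-q)/p$ exactly, so the entire burden falls on the sign of $s_2-s_1$. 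Under the lemma's stated hypothesis $0<s_1<s_2<1$, your $J_{\mathrm{near}}$ estimate therefore diverges.

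Moreover, inserting a third H\"older factor via $W^{s_1,p}_0(\Omega)\hookrightarrow L^{p^{*}_{s_1}}(\Omega)$ (or any other device) cannot close the gap, because the inequality is false as stated: sacrificing integrability can never purchase a gain in smoothness. Fix $\chi\in C^\infty_c(\Omega)$ with $\chi\equiv 1$ on a ball and set $u_R(x)=\chi(x)\cos(Rx_1)$. For every $s\in(0,1)$ and $t\in(1,\infty)$ one checks $\|u_R\|_{L^t(\Omega)}\sim 1$ and $[u_R]_{s,t}\sim R^{s}$ (split the seminorm at $|x-y|=1/R$ and use the Lipschitz bound $|u_R(x)-u_R(y)|\lesssim R|x-y|$ near the diagonal, the trivial bound $|u_R(x)|+|u_R(y)|$ far from it), whence
\[
\frac{\|u_R\|_{W^{s_2,q}_0(\Omega)}}{\|u_R\|_{W^{s_1,p}_0(\Omega)}}\sim R^{\,s_2-s_1}\longrightarrow\infty
\quad\text{as } R\to\infty .
\]
The lemma is true precisely under the reversed ordering $0<s_2<s_1<1$, which is the hypothesis of the cited Bhakta--Mukherjee lemma; with that ordering your two-factor H\"older argument, combined with your far-field and Lebesgue estimates, closes verbatim, since then $\alpha p/(p-q)<N$. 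So the gap is not in your architecture but in the statement itself: as written, with $s_1<s_2$, it is unprovable, and the paper appears to have interchanged the roles of $s_1$ and $s_2$ relative to its source.
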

The proof of the previuos Lemma can be found in \cite[Lemma 2]{BhaktaMukherjee}.
\vspace{0.5 cm}

We now introduce the following auxiliary problem, which plays a key role, as we will prove the existence results for this problem.
\begin{equation}\label{eq:probaux}
\begin{cases}
 M\left([u]^p_{p,s_1}\right)(-\Delta)^{s_1}_pu+M\left([u]^q_{q,s_2}\right)(-\Delta)^{s_2}_qu\\= \lambda a(x)|u_{+}|^{p-2}u_{+}
 +\lambda b(x)|u_{+}|^{q-2}u_{+} +h(x),\, &\text{in}\,\,\Omega\\
u(x)=0,\, &\text{on}\,\, \RR^N\setminus\Omega.
\end{cases}
\end{equation}
The following lemma shows us that the solutions to our auxiliary problem are non-negative, which allows us to conclude that the solutions to our original problem are non-negative.
\begin{lemma}
    Assume that \( a \), \( b \), and \( h \) are non-negative functions. Then, any solution \( u \in W_0^{s_1,p}(\Omega) \) to the problem \eqref{eq:probaux} is non-negative. In particular, \( u \in W_0^{s_1,p}(\Omega) \), is a solution to problem \eqref{eq:3}.
\end{lemma}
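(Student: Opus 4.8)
The plan is to test the weak formulation of \eqref{eq:probaux} against the negative part of $u$ and to exploit the monotonicity of the two fractional operators together with assumption \ref{item:M:1}. Write $u = u_+ - u_-$, where $u_+ = \max\{u,0\}$ and $u_- = \max\{-u,0\}$, so that $u_\pm \geq 0$ and $u_+ u_- \equiv 0$ pointwise; standard truncation estimates give $u_\pm \in W_0^{s_1,p}(\Omega)$, and by Lemma~\ref{lemma1:oct15} also $u_\pm \in W_0^{s_2,q}(\Omega)$. Recall that $u$ is a weak solution of \eqref{eq:probaux} if, for every $\varphi \in W_0^{s_1,p}(\Omega)$,
\begin{align*}
&M\!\left([u]^p_{p,s_1}\right)\iint_{\RR^{2N}}\frac{|u(x)-u(y)|^{p-2}(u(x)-u(y))(\varphi(x)-\varphi(y))}{|x-y|^{N+ps_1}}\,dx\,dy\\
&\quad + M\!\left([u]^q_{q,s_2}\right)\iint_{\RR^{2N}}\frac{|u(x)-u(y)|^{q-2}(u(x)-u(y))(\varphi(x)-\varphi(y))}{|x-y|^{N+qs_2}}\,dx\,dy\\
&\qquad = \lambda\int_\Omega a(x)|u_+|^{p-2}u_+\varphi\,dx + \lambda\int_\Omega b(x)|u_+|^{q-2}u_+\varphi\,dx + \int_\Omega h(x)\varphi\,dx .
\end{align*}
I would take $\varphi = u_-$ in this identity.

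The key technical ingredient is the elementary pointwise inequality
\begin{equation*}
|a-b|^{r-2}(a-b)\big(a_- - b_-\big) \leq -\,|a_- - b_-|^{r}, \qquad a,b\in\RR,\ r>1,
\end{equation*}
which I would establish by splitting into the cases $a,b\geq 0$; $a,b\leq 0$; and $a\geq 0\geq b$ (and its symmetric counterpart). Applying it with $r=p$ and $r=q$ and integrating against the respective kernels yields
\begin{equation*}
\iint_{\RR^{2N}}\frac{|u(x)-u(y)|^{p-2}(u(x)-u(y))(u_-(x)-u_-(y))}{|x-y|^{N+ps_1}}\,dx\,dy \leq -[u_-]_{s_1,p}^{p},
\end{equation*}
and likewise $\leq -[u_-]_{s_2,q}^{q}$ for the $q$-term. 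Consequently the left-hand side of the weak formulation evaluated at $\varphi=u_-$ is bounded above by $-M\!\left([u]^p_{p,s_1}\right)[u_-]^p_{s_1,p} - M\!\left([u]^q_{q,s_2}\right)[u_-]^q_{s_2,q}$.

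On the right-hand side, the two reaction integrals vanish identically because $u_+ u_- \equiv 0$, and the remaining term $\int_\Omega h\,u_-\,dx$ is non-negative since $h\geq 0$ and $u_-\geq 0$. Combining these bounds with the equality furnished by the weak formulation gives
\begin{equation*}
0 \leq \int_\Omega h\,u_-\,dx \leq -M\!\left([u]^p_{p,s_1}\right)[u_-]^p_{s_1,p} - M\!\left([u]^q_{q,s_2}\right)[u_-]^q_{s_2,q} \leq 0,
\end{equation*}
where the last inequality uses $M\geq m_0>0$ from \ref{item:M:1}. Hence $M\!\left([u]^p_{p,s_1}\right)[u_-]^p_{s_1,p}=0$, and since $M\geq m_0>0$ this forces $[u_-]_{s_1,p}=0$; as $u_-=0$ on $\RR^N\setminus\Omega$, I conclude $u_-\equiv 0$, i.e.\ $u=u_+\geq 0$. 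Finally, with $u\geq 0$ we have $u_+=u$, so $|u_+|^{p-2}u_+=|u|^{p-2}u$ and $|u_+|^{q-2}u_+=|u|^{q-2}u$, and therefore $u$ solves the original problem \eqref{eq:3}.

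The main obstacle I anticipate is the careful justification of the pointwise inequality in the mixed-sign case $a\geq 0\geq b$, where $r$ is not an even integer and one must compare $|a-b|^{r-1}$ with $|b|^{r-1}$ using $a-b\geq -b\geq 0$; once this monotonicity-type estimate is in place, the remainder is routine bookkeeping with the sign conditions on $a,b,h$ and the lower bound \ref{item:M:1}.
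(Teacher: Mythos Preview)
Your proof is correct and follows essentially the same approach as the paper: test the weak formulation with the negative part of $u$, use the pointwise inequality $|a-b|^{r-2}(a-b)(a_--b_-)\le -|a_--b_-|^r$ to control the nonlocal terms, observe that the reaction terms vanish since $u_+u_-\equiv 0$, and conclude via \ref{item:M:1}. The only cosmetic difference is the sign convention---the paper takes $u_-:=\min\{0,u\}\le 0$ and states the inequality as $|x-y|^{p-2}(x-y)(x_--y_-)\ge |x_--y_-|^p$, whereas you use $u_-=\max\{-u,0\}\ge 0$; the two formulations are equivalent up to an overall sign.
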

\begin{proof}
Suppose that $u \in W_0^{s_1,p}(\Omega)$ is a solution to \eqref{eq:probaux}. Testing the equation with $u_{-}:=\min \{0,u\}$ and the following inequality,
\begin{equation*}
|x - y|^{p - 2}(x - y)(x_{-} - y_{-}) \geq |x_{-} - y_{-}|^p, \quad \forall p\geq 1,
\end{equation*}
we obtain
\begin{align*}
m_0[u_{-}]^p_{s_1} +m_0 [u_{-}]^q_{s_2} -\int_{\Omega} h(x)u_-\leq&M([u]^p_{s_1})\int_{\Omega}(-\Delta)^{s_1}_p u u_- + M([u]^q_{s_2})\int_{\Omega}(-\Delta)^{s_2}_q uu_- \\
&-\int_{\Omega} \lambda\left(a(x)|u_{+}|^{p-2}u_+u_- - b(x)|u_{+}|^{q-2}u_+u_-\right)\\
&- \int_{\Omega} h(x)u_-=0.
\end{align*}
It follows that,
\begin{align*}
m_0  ||u_||^{p}_{W^{s_1,p}_0}\leq m_0[u_{-}]^p_{s_1} +m_0 [u_{-}]^q_{s_2}  -\int_{\Omega} h(x)u_{-}  \leq 0.  
\end{align*}
Thus, we conclude that $u_- = 0$. Hence, $u \geq 0$ is a solution to the problem \eqref{eq:3}.

\end{proof}
 We introduce the following energy functional   $J:W^{s_1,p}_0(\Omega) \to \RR $ defined by 
\begin{align}
J(u)&\nonumber=\displaystyle\frac{1}{p}\mathscr{M}\left([u]^p_{s_1,p}\right)+\displaystyle\frac{1}{q}\mathscr{M}\left([u]^q_{s_2,q}\right)-\displaystyle\frac{\lambda}{p}\int_{\Omega}a(x)|u_{+}|^p\,dx-\displaystyle\frac{\lambda}{q}\int_{\Omega}b(x)|u_{+}|^q\,dx\\
&\label{functenergy:Oct08}\quad-\displaystyle\int_{\Omega}h(x)u(x)\,dx,
\end{align}
which is associated to \eqref{eq:probaux}. 

\vspace{0.5 cm}

\begin{lemma}\label{wlscfunctionalJ:march15}
  $J\in C^{1}(W^{s_1,p}_0(\Omega))$ is weakly lower semi-continuous in $W^{s_1,p}_0(\Omega)$. 
\end{lemma}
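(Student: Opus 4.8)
The plan is to decompose $J$ into five pieces and to verify both assertions term by term, using that $C^1$-regularity is additive and that weak lower semicontinuity is preserved under sums provided the subtracted terms are weakly \emph{continuous}. Write
$J=\Phi_1+\Phi_2-\Psi_1-\Psi_2-L$, where $\Phi_1(u)=\frac1p\mathscr{M}\big([u]^p_{s_1,p}\big)$, $\Phi_2(u)=\frac1q\mathscr{M}\big([u]^q_{s_2,q}\big)$, $\Psi_1(u)=\frac{\lambda}{p}\int_\Omega a(x)|u_+|^p\,dx$, $\Psi_2(u)=\frac{\lambda}{q}\int_\Omega b(x)|u_+|^q\,dx$, and $L(u)=\int_\Omega h(x)u\,dx$. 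I would first record that $\Phi_2$ is well defined on $W^{s_1,p}_0(\Omega)$ precisely because of Lemma~\ref{lemma1:oct15}, which gives $[u]_{s_2,q}\le\|u\|_{W^{s_2,q}_0(\Omega)}\le C\|u\|_{W^{s_1,p}_0(\Omega)}<\infty$.

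For the $C^1$ claim I would begin with $\Phi_1$. Since $M$ is continuous by~\ref{item:M:1}, its primitive $\mathscr{M}$ is $C^1$ with $\mathscr{M}'=M$, while $u\mapsto[u]^p_{s_1,p}$ is Fr\'echet differentiable with derivative (up to the factor $p$) the Gagliardo pairing defining $(-\Delta)^{s_1}_p$ weakly. The chain rule then yields
\[
\langle \Phi_1'(u),v\rangle
= M\big([u]^p_{s_1,p}\big)\iint_{\RR^{2N}}
\frac{|u(x)-u(y)|^{p-2}(u(x)-u(y))(v(x)-v(y))}{|x-y|^{N+ps_1}}\,dx\,dy ,
\]
whose continuity follows from continuity of $M$ together with that of $t\mapsto|t|^{p-2}t$ and H\"older's inequality on the double integral. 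The identical scheme would give $\Phi_2\in C^1$, composing $\mathscr{M}$ with the $W^{s_2,q}_0$-seminorm and again invoking Lemma~\ref{lemma1:oct15} to keep everything finite on $W^{s_1,p}_0(\Omega)$. I would treat $\Psi_1,\Psi_2$ as Nemytskii functionals: since $a,b\in L^\infty(\Omega)$ and $p,q<p^*_{s_1}$, the continuous embeddings $W^{s_1,p}_0(\Omega)\hookrightarrow L^p(\Omega),L^q(\Omega)$ hold and $t\mapsto(t_+)^p,(t_+)^q$ are $C^1$ with derivatives $p(t_+)^{p-1},q(t_+)^{q-1}$, so $\Psi_1,\Psi_2\in C^1$. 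Finally $L$ is a bounded linear functional by H\"older's inequality and the Sobolev embedding into the relevant Lebesgue space. Summing gives $J\in C^1(W^{s_1,p}_0(\Omega))$.

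For weak lower semicontinuity, take $u_n\rightharpoonup u$ in $W^{s_1,p}_0(\Omega)$. The seminorm power $u\mapsto[u]^p_{s_1,p}$ is convex and continuous, hence weakly lower semicontinuous, so $[u]^p_{s_1,p}\le\liminf_n[u_n]^p_{s_1,p}$. Because $M\ge m_0>0$ makes $\mathscr{M}$ non-decreasing and continuous, a standard $\varepsilon$-argument gives $\mathscr{M}(\liminf_n a_n)\le\liminf_n\mathscr{M}(a_n)$; combined with monotonicity of $\mathscr{M}$ this shows $\Phi_1$ is weakly lower semicontinuous, and the same applies to $\Phi_2$ after transferring the weak convergence to $W^{s_2,q}_0(\Omega)$ via the continuous embedding of Lemma~\ref{lemma1:oct15}. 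The decisive point for the subtracted terms is that, $\Omega$ being bounded and $p,q<p^*_{s_1}$, the embeddings $W^{s_1,p}_0(\Omega)\hookrightarrow L^p(\Omega),L^q(\Omega)$ are \emph{compact}, so $u_n\to u$ strongly in $L^p(\Omega)$ and $L^q(\Omega)$; continuity of the Nemytskii maps then forces $\Psi_1(u_n)\to\Psi_1(u)$ and $\Psi_2(u_n)\to\Psi_2(u)$, and $L(u_n)\to L(u)$ by weak continuity of $L$. A weakly lower semicontinuous part plus weakly continuous parts yields the weak lower semicontinuity of $J$.

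The main obstacle I anticipate is the rigorous differentiability and, above all, the continuity of the derivative of the two nonlocal Kirchhoff terms: one must verify that the G\^ateaux derivative of $u\mapsto[u]^p_{s_1,p}$ is linear, bounded, and depends continuously on $u$, where the delicate estimate is controlling $\big||u(x)-u(y)|^{p-2}(u(x)-u(y))-|w(x)-w(y)|^{p-2}(w(x)-w(y))\big|$ in the weighted $L^{p/(p-1)}$-norm. The composition with $\mathscr{M}$ and the transfer of the second term through Lemma~\ref{lemma1:oct15} add only bookkeeping, while the compact embeddings make the remaining terms routine.
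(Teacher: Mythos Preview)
Your decomposition and overall strategy coincide with the paper's own proof: both argue $C^1$ by computing the Fr\'echet derivative via the chain rule for the Kirchhoff terms and standard Nemytskii/linear arguments for the rest, and both obtain weak lower semicontinuity from convexity of the Gagliardo seminorm powers, monotonicity and continuity of $\mathscr{M}$, and compactness of the embeddings $W^{s_1,p}_0(\Omega)\hookrightarrow L^p(\Omega),L^q(\Omega)$ for the subtracted terms. The one place where the paper adds substance beyond your sketch is precisely the ``main obstacle'' you flag at the end: to prove continuity of $J'$ the paper takes $u_n\to u$ strongly, passes to a.e.\ convergent subsequences, and invokes the Brezis--Lieb lemma to show that the kernels $|u_n(x)-u_n(y)|^{p-2}(u_n(x)-u_n(y))/|x-y|^{(N+s_1p)/p'}$ converge in $L^{p'}(\RR^{2N})$ (and similarly for the $q$-term), which is exactly the weighted $L^{p/(p-1)}$ estimate you anticipated needing.
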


\begin{proof}

\vspace{0.5 cm}
First, we prove that $J\in C^{1}(W^{s_1,p}_0(\Omega))$. It is standard to see that $J$ is Frech\^{e}t differentiable in $W^{s_1,p}_0(\Omega)$, with derivative
\begin{align*}
 \langle J'(u), \varphi \rangle &= 
M\left([u]^p_{s_1,p}\right)\iint_{\Omega\times \Omega}   
\frac{|u(x) - u(y)|^{p-2}}{|x - y|^{N+s_1 p}}(u(x) - u(y)) (\varphi(x) - \varphi(y)) \, dx \, dy\\
&\quad + 
M\left([u]^q_{s_2,q}\right)\iint_{\Omega\times\Omega}  \frac{|u(x) - u(y)|^{q-2}}{|x - y|^{N+s_2 q}}  (u(x) - u(y)) (\varphi(x) - \varphi(y)) \, dx \, dy\\
&\quad- \lambda \int_{\Omega} \left[ 
a(x) |u_{+}|^{p-2} + b(x) |u_{+}|^{q-2} 
\right] u_{+} \varphi \, dx - \int_{\Omega} h\varphi\,dx, \quad \forall \varphi\in W^{s_1,p}_0(\Omega).
\end{align*}
It remains to be proved that $J'$ is continuous. Let $\{u_n\}_{n\in\NN}\subset W^{s_1,p}_{0}(\Omega)$ and $u\in W^{s_1,p}_0$ satisfy $u_n\rightarrow u$ in $W^{s_1,p}(\Omega)$ as $n\to\infty$. Without loss of generality, we assume that $u_n\rightarrow u$ a.e. in $\RR^N$. Then, the sequence
\begin{equation*}
   \left\{\displaystyle\frac{|u_n(x)-u_n(y)|^{p-2}\left(u_n(x)-u_n(y)\right)}{|x-y|^{\frac{N+s_1p}{p'}}}\right\}_{n=1}^{\infty}\quad\text{is bounded in}\, L^{p'}(\RR^{2N})
\end{equation*}
and
\begin{equation*}
\left\{\frac{|u_n(x)-u_n(y)|^{q-2}\left(u_n(x)-u_n(y)\right)}{|x-y|^{\frac{N+s_2q}{p'}}}\right\}_{n=1}^{\infty}\quad\text{is bounded in}\, L^{q'}(\RR^{2N}) 
\end{equation*}
as well a.e. in $\RR^{2N}$, where the pair $p$ and $p'$ are conjugated index, i.e. $\frac{1}{p}+\frac{1}{p'}=1$. 
\begin{align*}
    \mathcal{U}_n(x,y):=\displaystyle\frac{|u_n(x)-u_n(y)|^{p-2}\left(u_n(x)-u_n(y)\right)}{|x-y|^{\frac{N+s_1p}{p'}}}\rightarrow \mathcal{U}(x,y):=\displaystyle\frac{|u(x)-u(y)|^{p-2}\left(u(x)-u(y)\right)}{|x-y|^{\frac{N+s_1p}{p'}}}
\end{align*}
and
\begin{align*}
    \mathcal{V}_n(x,y):=\displaystyle\frac{|u_n(x)-u_n(y)|^{q-2}\left(u_n(x)-u_n(y)\right)}{|x-y|^{\frac{N+s_2q}{q'}}}\rightarrow \mathcal{V}(x,y):=\displaystyle\frac{|u(x)-u(y)|^{q-2}\left(u(x)-u(y)\right)}{|x-y|^{\frac{N+s_2q}{q'}}}.
\end{align*}
Thus, the Brezis-Lieb lemma (see \cite{brezislieb}) implies
\begin{align}
&\nonumber\displaystyle\lim_{n\to \infty}\iint_{\RR^{2N}}|\mathcal{U}_n(x,y)-\mathcal{U}(x,y)|^{p'}\,dx\,dy\\
&\label{eq1:march19}=\displaystyle\lim_{n\to \infty}\iint_{\RR^{2N}}\left(\frac{|u_n(x)-u_n(y)|^p}{|x-y|^{N+ps_1}}-\frac{|u(x)-u(y)|^p}{|x-y|^{N+ps_1}}\right)\,dx\,dy
\end{align}
and
\begin{align}
&\nonumber\displaystyle\lim_{n\to \infty}\iint_{\RR^{2N}}|\mathcal{V}_n(x,y)-\mathcal{V}(x,y)|^{q'}\,dx\,dy\\
&\label{eq2:march19}=\displaystyle\lim_{n\to \infty}\iint_{\RR^{2N}}\left(\frac{|u_n(x)-u_n(y)|^q}{|x-y|^{N+qs_2}}-\frac{|u(x)-u(y)|^q}{|x-y|^{N+qs_2}}\right)\,dx\,dy.
\end{align}
Moreover, the continuity of $M$ implies that
\begin{equation}\label{eq3:march19}
\displaystyle \lim_{n\to\infty} M([u_n]^p_{s_1,p})=M([u]^p_{s_1,p}), 
\end{equation}
and
\begin{equation}\label{eq4:march19}
\displaystyle \lim_{n\to\infty} M([u_n]^q_{s_2,q})=M([u]^q_{s_2,q}).
\end{equation}
From (\ref{eq1:march19}) and (\ref{eq2:march19}) it follows that
\begin{equation}\label{eq5:march19}
   \displaystyle\lim_{n\to \infty}\iint_{\RR^{2N}}|\mathcal{U}_n(x,y)-\mathcal{U}(x,y)|^{p'}\,dx\,dy=0 
\end{equation}
and
\begin{equation}\label{eq6:march19}
   \displaystyle\lim_{n\to \infty}\iint_{\RR^{2N}}|\mathcal{V}_n(x,y)-\mathcal{V}(x,y)|^{q'}\,dx\,dy=0.
\end{equation}
Similarly,

\begin{equation}\label{eq7:march19}
   \displaystyle\lim_{n\to \infty}\int_{\RR^{N}}a(x)||u_n(x)|^{p-2}u_n(x)-|u(x)|^{p-2}u(x)|^{p'}\,dx=0, 
\end{equation}
and
\begin{equation}\label{eq8:march19}
   \displaystyle\lim_{n\to \infty}\int_{\RR^{N}}b(x)||u_n(x)|^{q-2}u_n(x)-|u(x)|^{q-2}u(x)|^{q'}\,dx=0.
\end{equation}

Combining (\ref{eq3:march19})-(\ref{eq7:march19}) and (\ref{eq8:march19}) with H\"older's inequality, we have
\begin{align*}
||J'(u_n)-J'(u)||_{\left(W^{s_1,p}_0(\Omega)\right)'}=\sup_{\varphi\in (W^{s_1,p}_0(\Omega)), ||\varphi||_{W^{s_1,p}_0(\Omega)}=1}\left|\langle J'(u_n)-J'(u), \varphi\rangle\right|\rightarrow 0,   
\end{align*}
as $n\to\infty$. 
Finally, notice that the maps $v\mapsto [v]^p_{s_1,p}$ and $v\mapsto [v]^q_{s_2,q}$ are lower semi-continuous in the weak topology of $W^{s_1,p}(\RR^N)$ and $W^{s_2,q}(\RR^N)$ respectively and $\mathcal{M}$ is nondecreasing and continuous on $\RR^{+}_0$, so that $v\mapsto\mathcal{M}([v]^p_{s_1,p})$ and  $v\mapsto \mathcal{M}([v]^q_{s_2,q})$ are semi-continuous in the weak topology of $W^{s_1,p}(\RR^N)$ and $W^{s_2,q}(\RR^N)$ respectively. Indeed, we can define $\psi: W^{s_1,p}(\RR^N)\mapsto \RR$ and $\varphi: W^{s_2,q}(\RR^N)\mapsto \RR$ as follows
\begin{align*}
    \psi(v)&=\displaystyle\iint_{\RR^{2N}}|v(x)-v(y)|^p|x-y|^{-N-s_1p}\,dx\,dy,\\    \varphi(u)&=\displaystyle\iint_{\RR^{2N}}|u(x)-u(y)|^q|x-y|^{-N-s_2q}\,dx\,dy.
\end{align*}
To analyze the differentiability of the functionals $\psi$ and $\varphi$, we observe that both are defined in terms of the Gagliardo seminorm, which is known to be continuously differentiable $C^1$ in fractional Sobolev spaces. Besides, both $\psi$ and $\varphi$ are convex functionals in $W^{s_1,p}(\mathbb{R}^N)$ and $W^{s_2,q}(\mathbb{R}^N)$, respectively.  Indeed, since the function $f(\xi) = |\xi|^p$ is convex for $p \geq 1$, the integrand
\begin{equation*}
\displaystyle \frac{|v(x)-v(y)|^p}{|x-y|^{N+s_1p}}    
\end{equation*}
is convex in $v$ (pointwise in the variable $(x,y)$). Integration preserves convexity; hence, $\psi$ is convex as a functional on $W^{s_1,p}(\mathbb{R}^N)$. A similar argument applies to $\varphi$ on $W^{s_2,q}(\mathbb{R}^N)$. By Corollary 3.8 in \cite{Brezis2011}, we obtain
\begin{align*}
    \psi(v)&\leq \liminf_{n\to \infty}\psi (v_n) 
\end{align*}
and
\begin{align*}
    \varphi(v)&\leq \liminf_{n\to \infty}\psi (v_n). 
\end{align*}
Let's continue to prove the weak lower semicontinuity of the last terms in (\ref{functenergy:Oct08}), by hypothe\-sis $a(x)$ and $b(x)$ are positive functions belonging to $L^\infty(\Omega)$, and $h(x) \in L^q(\Omega)$. By the compact embedding $W_0^{s_1,p}(\Omega) \hookrightarrow L^p(\Omega)$ and $W_0^{s_2,q}(\Omega) \hookrightarrow L^q(\Omega)$, along with the Dominated Convergence Theorem and Hölder’s inequality, we obtain
\begin{equation*}
\lim_{n \to \infty} \int_{\Omega} a(x) |(u_n)_+(x)|^p \, dx = \int_{\Omega} a(x) |u_+(x)|^p \, dx,
\end{equation*}
\begin{equation*}
\lim_{n \to \infty} \int_{\Omega} b(x) |(u_n)_+(x)|^q \, dx = \int_{\Omega} b(x) |u_+(x)|^q \, dx,
\end{equation*}
and
\begin{equation*}
\lim_{n \to \infty} \int_{\Omega} h(x) u_n(x) \, dx = \int_{\Omega} h(x) u(x) \, dx.    
\end{equation*}
Since the sum of weakly lower semicontinuous functionals remains weakly lower semicontinuous, it follows that the functional $J(u)$ is weakly lower semicontinuous on the space $W_0^{s_1,p}(\Omega)$.
\end{proof}
\vspace{0.25 cm}
Let \( \lambda^a_{1,p} >0 \) be the first eigenvalue of the weighted fractional \( p \)-Laplacian problem:
\begin{equation*}
\begin{cases}
(-\Delta)^{s_1}_p u = \lambda a(x) |u|^{p-2}u & \text{in } \Omega, \\
u = 0 & \text{in } \mathbb{R}^N \setminus \Omega,
\end{cases}
\end{equation*}
where  $a(x)$ is a non-negative weight function and $u \in W^{s_1,p}_0(\Omega)$.

Then, define 
\begin{equation*}
\lambda_{0} := \frac{m_0}{\theta} \lambda^a_{1,p}.
\end{equation*}

\begin{lemma}\label{lemma2:oct15}
 Assume that $\lambda< \lambda_0$, then the energy functional $J$ defined by (\ref{functenergy:Oct08}) is coercive.
\end{lemma}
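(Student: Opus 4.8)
The plan is to prove directly that $J(u)\to+\infty$ as $\|u\|_{W^{s_1,p}_0(\Omega)}=[u]_{s_1,p}\to\infty$, by showing that the leading $p$-homogeneous part dominates everything else precisely when $\lambda<\lambda_0$. First I would bound the two Kirchhoff terms from below. Combining \ref{item:M:1} and \ref{item:M:2}, for every $t\geq0$ one has $\mathscr{M}(t)\geq\frac{1}{\theta}M(t)t\geq\frac{m_0}{\theta}t$; applying this with $t=[u]^p_{s_1,p}$ and $t=[u]^q_{s_2,q}$ yields
\[
\frac{1}{p}\mathscr{M}\!\left([u]^p_{s_1,p}\right)\geq\frac{m_0}{\theta p}[u]^p_{s_1,p},\qquad
\frac{1}{q}\mathscr{M}\!\left([u]^q_{s_2,q}\right)\geq\frac{m_0}{\theta q}[u]^q_{s_2,q}\geq0,
\]
so the $q$-Kirchhoff term is non-negative and can be discarded (or retained to absorb the $b$-term in the borderline case below).

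The threshold $\lambda_0$ comes from the first eigenvalue. The Rayleigh characterization of $\lambda^a_{1,p}$ gives, for every $u\in W^{s_1,p}_0(\Omega)$, the Poincaré-type inequality $\int_\Omega a(x)|u|^p\,dx\leq\frac{1}{\lambda^a_{1,p}}[u]^p_{s_1,p}$, and since $|u_+|\leq|u|$ the same bound holds with $u$ replaced by $u_+$. Consequently the dominant pair of terms satisfies
\[
\frac{1}{p}\mathscr{M}\!\left([u]^p_{s_1,p}\right)-\frac{\lambda}{p}\int_\Omega a(x)|u_+|^p\,dx
\geq\frac{1}{p}\left(\frac{m_0}{\theta}-\frac{\lambda}{\lambda^a_{1,p}}\right)[u]^p_{s_1,p},
\]
and the coefficient is strictly positive exactly when $\lambda<\frac{m_0}{\theta}\lambda^a_{1,p}=\lambda_0$.

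It then remains to absorb the two lower-order terms. For the $b$-term I would use $\int_\Omega b(x)|u_+|^q\,dx\leq\|b\|_{L^\infty(\Omega)}\|u\|_{L^q(\Omega)}^q\leq C[u]^q_{s_1,p}$, where the continuous embedding $W^{s_1,p}_0(\Omega)\hookrightarrow L^q(\Omega)$ follows from Lemma \ref{lemma1:oct15} together with the fractional Sobolev embedding. For the source term, Hölder's inequality and the same embedding give $\int_\Omega h\,u\,dx\leq\|h\|_{L^q(\Omega)}\|u\|_{L^{q'}(\Omega)}\leq C\|h\|_{L^q(\Omega)}[u]_{s_1,p}$ (the admissibility of the exponents on the bounded domain $\Omega$ being routine to check). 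Collecting the estimates,
\[
J(u)\geq\frac{1}{p}\left(\frac{m_0}{\theta}-\frac{\lambda}{\lambda^a_{1,p}}\right)[u]^p_{s_1,p}-\frac{\lambda C}{q}[u]^q_{s_1,p}-C\|h\|_{L^q(\Omega)}[u]_{s_1,p},
\]
and since $\lambda<\lambda_0$ makes the leading coefficient positive while $1\leq q<p$ and $p>1$, the right-hand side tends to $+\infty$ as $[u]_{s_1,p}\to\infty$, which is the claimed coercivity.

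The main obstacle is ensuring the leading coefficient is genuinely positive, which is exactly the role played by the hypothesis $\lambda<\lambda_0$ via the first eigenvalue; the subtle point is the borderline case $q=p$. If $q<p$ strictly, the $b$-term is of strictly lower order and is dominated automatically by the $[u]^p_{s_1,p}$ term. If $q=p$, one cannot treat it as lower order: instead I would retain $\frac{1}{q}\mathscr{M}([u]^q_{s_2,q})\geq\frac{m_0}{\theta q}[u]^q_{s_2,q}$ and control $\int_\Omega b(x)|u_+|^q\,dx$ by the first eigenvalue of the weighted fractional $q$-Laplacian at level $s_2$, which forces an analogous smallness restriction on $\lambda$. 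Apart from this borderline bookkeeping, verifying the embedding constants and the exponent admissibility for the $h$-term is standard.
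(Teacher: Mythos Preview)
Your proof is correct and follows essentially the same approach as the paper: bound the Kirchhoff terms below via \ref{item:M:1}--\ref{item:M:2}, control the $a$-term through the Rayleigh quotient for $\lambda^a_{1,p}$, and handle the $b$- and $h$-terms by H\"older plus Sobolev embeddings, arriving at the same final inequality with positive leading coefficient when $\lambda<\lambda_0$. Your explicit discussion of the borderline case $q=p$ is in fact more careful than the paper's proof, which simply asserts coercivity from the sign of the leading coefficient without isolating that case.
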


\begin{proof}
Let be $u\in W^{s_1,p}_0(\Omega)$ and suppose that assumptions~\ref{item:M:1}–\ref{item:M:2} are satisfied. Then by Poincar\'e inequality and H\"older's inequality, it follows that 
\begin{align*}
    J(u)&=\displaystyle\frac{1}{p}\mathscr{M}\left([u]^p_{s_1,p}\right)+\displaystyle\frac{1}{q}\mathscr{M}\left([u]^q_{s_2,q}\right)-\displaystyle\frac{\lambda}{p}\int_{\Omega}a(x)|u_{+}|^p\,dx-\displaystyle\frac{\lambda}{q}\int_{\Omega}b(x)|u_{+}|^q\,dx\\
&\quad-\displaystyle\int_{\Omega}h(x)u(x)\,dx,\\
&\geq \displaystyle\frac{m_0}{\theta p}||u||^{p}_{W^{s_1,p}_0(\Omega)}+\displaystyle\frac{m_0}{\theta q}||u||^{q}_{W^{s_2,p}_{0}(\Omega)}-\displaystyle\frac{\lambda}{p}\int_{\Omega}a(x)|u|^p\,dx-\displaystyle\frac{\lambda}{q}\int_{\Omega}b(x)|u|^q\,dx\\
&\quad-C_1||h||_{L^q(\Omega)}||u||_{W^{s_1,p}_0(\Omega)}\\
&\geq \left(\displaystyle\frac{m_0}{\theta p}- \displaystyle\frac{\lambda}{p\lambda^a_{1,p}}\right)||u||^{p}_{W^{s_1,p}_0(\Omega)}-\frac{\lambda}{q}C_2||b||_{L^{\infty}}||u||^q_{W^{s_2,q}_0(\Omega)}- C_1||h||_{L^q(\Omega)}||u||_{W^{s_1,p}_0(\Omega)}.
\end{align*}
Since  
\[
\frac{m_0}{\theta p} - \frac{\lambda}{p\lambda^a_{1,p}} > 0,
\]  
we conclude that the functional \( J \) is coercive and bounded from below.
\end{proof}
\vspace{0.5 cm}
In preparation for proving our main result, we present concepts and results that will be instrumental in our analysis. We begin by defining a Palais–Smale sequence, which plays a central role in variational methods and the Mountain Pass Theorem, a fundamental tool for establishing the existence of nontrivial solutions to nonlinear equations. 
\vspace{0.5 cm}
\begin{definition}\label{Palais-Smaledef} (Palais-Smale condition). Let $\{u_n\}_{n=1}^{\infty}$ be a sequence. We say that $\{u_n\}_{n=1}^{\infty}$ is a Palais–Smale sequence at level $c$, denoted $(PS)_c$, for the functional $J$, if
\begin{itemize}
    \item[(i)] $J(u_n)\to c$ as $n\to \infty$ and
    \item[(ii)] $J'(u_n)\to 0$ in $(W_0^{s,p}(\Omega))'$as $n\to \infty$.
\end{itemize}
\end{definition}

\begin{teo}\label{teopasodemontana}
(Mountain Pass Theorem, cf. \cite{AmbrosettiRabino,ServadeiValdinochi,Willem}). Let \( X \) be a real Banach space and let \( \Phi \in C^1(X, \mathbb{R}) \). Suppose there exist constants \( \beta > 0 \), \( \rho > 0 \), and a point \( x_1 \in X \setminus B_\rho(0) \) such that
\begin{itemize}
    \item[(i)] \( \Phi(u) \geq \beta \) for all \( u \in X \) with \( \|u\|_X = \rho \);
    \item[(ii)] \( \Phi(x_1) < \beta \), and \( \Phi(0) = 0 \).
\end{itemize}
Then, the functional \( \Phi \) possesses a Palais–Smale sequence at the level
\begin{equation*}
c := \inf_{\gamma \in \Gamma} \max_{t \in [0,1]} \Phi(\gamma(t)),    
\end{equation*}
where
\begin{equation*}
\Gamma := \left\{ \gamma \in C([0,1], X) \ : \ \gamma(0) = 0,\ \gamma(1) = x_1 \right\}.    
\end{equation*}
\end{teo}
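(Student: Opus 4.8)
The plan is to prove the weak form stated here: I only need to manufacture a Palais--Smale sequence at the minimax level $c$, not to show $c$ is attained, so no compactness of $\Phi$ is required. I would argue by contradiction via a deformation. First, however, I must check that $c$ is a genuine value with $c \geq \beta > 0$. The class $\Gamma$ is nonempty since the segment $t \mapsto t x_1$ lies in it, and for any $\gamma \in \Gamma$ the continuous scalar map $t \mapsto \|\gamma(t)\|_X$ runs from $0$ to $\|x_1\|_X \geq \rho$, so by the intermediate value theorem there is $t_0$ with $\|\gamma(t_0)\|_X = \rho$. Hypothesis (i) then gives $\max_{t\in[0,1]} \Phi(\gamma(t)) \geq \Phi(\gamma(t_0)) \geq \beta$, and taking the infimum over $\Gamma$ yields $\beta \leq c \leq \max_{t} \Phi(t x_1) < \infty$; in particular $c > 0$.

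Next I would set up the contradiction. Assume $\Phi$ possesses no $(PS)_c$ sequence. Then there must exist constants $\alpha > 0$ and $\delta > 0$ with $\|\Phi'(u)\|_{X'} \geq \alpha$ for every $u$ in the band $\Phi^{-1}([c-\delta, c+\delta])$: otherwise, choosing $\alpha = \delta = 1/n$ would produce points $u_n$ with $|\Phi(u_n) - c| \leq 1/n$ and $\|\Phi'(u_n)\|_{X'} < 1/n$, i.e.\ exactly a $(PS)_c$ sequence. Shrinking $\delta$, I may further require $\delta < \min\{\,c - \Phi(x_1),\, c\,\}$, a choice that will keep path endpoints untouched below.

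I would then deform a near-optimal path. On the band $\Phi'$ is bounded away from $0$, so using only $\Phi \in C^1(X,\mathbb{R})$ one constructs a locally Lipschitz pseudo-gradient field for $\Phi$ and, after a suitable cut-off, integrates it to a deformation $\eta \in C([0,1]\times X, X)$ with: $\eta(s,u) = u$ whenever $|\Phi(u) - c| \geq \delta$; $s \mapsto \Phi(\eta(s,u))$ nonincreasing; and $\Phi(\eta(1,u)) \leq c - \delta$ whenever $\Phi(u) \leq c + \delta$. Because $c$ is an infimum, there is $\gamma \in \Gamma$ with $\max_t \Phi(\gamma(t)) \leq c + \delta$; set $\tilde\gamma(t) := \eta(1, \gamma(t))$. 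Since $\Phi(0) = 0$ and $\Phi(x_1) < \beta \leq c$ both lie outside $[c-\delta, c+\delta]$ by the choice of $\delta$, the deformation fixes the endpoints, so $\tilde\gamma \in \Gamma$; yet $\max_t \Phi(\tilde\gamma(t)) \leq c - \delta < c$, contradicting the definition of $c$. Hence a $(PS)_c$ sequence exists.

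The heart of the argument --- and the main obstacle --- is the construction of $\eta$. Since $X$ is only a Banach space, there is no gradient flow to integrate, and one must first build a locally Lipschitz pseudo-gradient field $V$ on $\{u : \Phi'(u) \neq 0\}$ satisfying $\|V(u)\|_X \leq 2\|\Phi'(u)\|_{X'}$ and $\langle \Phi'(u), V(u)\rangle \geq \|\Phi'(u)\|_{X'}^2$ via a partition of unity, then solve the associated Cauchy problem and verify the three properties of $\eta$ using the uniform bound $\|\Phi'\|_{X'} \geq \alpha$ from the previous step. An alternative avoiding pseudo-gradients is to apply Ekeland's variational principle to $\Psi(\gamma) := \max_t \Phi(\gamma(t))$ on the complete metric space $(\Gamma, d_\infty)$, extracting from a near-minimizing path a point of small gradient; but that extraction is itself delicate, so I would favor the deformation route.
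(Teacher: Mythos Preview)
Your argument is the standard deformation proof of the Mountain Pass Theorem and is correct as sketched: the lower bound $c\geq\beta$ follows from the intermediate value theorem applied to $t\mapsto\|\gamma(t)\|_X$, the negation of the existence of a $(PS)_c$ sequence gives a uniform lower bound on $\|\Phi'\|_{X'}$ over a level band, and the quantitative deformation lemma (built from a locally Lipschitz pseudo-gradient field) then pushes a near-optimal path below level $c$, contradicting the definition of $c$. The only place where some care is still required is matching the width of the band to the amount by which the deformation lowers the level; in the standard formulation one takes the band $[c-2\varepsilon,c+2\varepsilon]$ and the deformation sends $\{\Phi\leq c+\varepsilon\}$ into $\{\Phi\leq c-\varepsilon\}$, with $\varepsilon$ chosen small relative to $\alpha$. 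Your ``shrinking $\delta$'' remark signals awareness of this, but you should make the quantitative relation explicit when writing it out in full.

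As for comparison with the paper: there is nothing to compare. The paper does not prove this theorem at all; it is quoted as a classical result with the citation ``cf.\ \cite{AmbrosettiRabino,ServadeiValdinochi,Willem}'' and used as a black box in the proof of the main theorem. So your proposal supplies a proof where the paper simply invokes the literature. If you want to align with the paper's presentation, you would omit the proof entirely and refer to Willem's book or the original Ambrosetti--Rabinowitz paper; if instead the goal is a self-contained write-up, your deformation route is the canonical one and is exactly what those references contain.
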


\begin{teo}\label{teogeo} Assume \( 0 < s_1 < s_2 \leq 1 < q < p \), then there exists \( \lambda_{00} > 0 \) such that:
\begin{enumerate}
\item [(1)] There exist \( \delta > 0 \) and \( r > 0 \) such that for all \( \lambda \in (0, \lambda_{00}) \), \( J(u) \geq \delta \) with \( \| u \|_{W^{s_1, p}_0(\Omega)} = r \).
\item [(2)] There exists $u_0\in W^{s_1, p}_0(\Omega)$ with $||u_0||_{W^{s_1, p}_0(\Omega)}>r$ and $J(u_0)<0$.
\end{enumerate}
\end{teo}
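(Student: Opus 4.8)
The plan is to verify directly the two geometric hypotheses of the Mountain Pass Theorem (Theorem~\ref{teopasodemontana}) for the functional $J$ from \eqref{functenergy:Oct08}, recalling that $J(0)=0$. Throughout I write $\|u\|:=\|u\|_{W^{s_1,p}_0(\Omega)}=[u]_{s_1,p}$ (Poincar\'e makes the Gagliardo seminorm a norm on $W^{s_1,p}_0$). The tools are: condition~\ref{item:M:1} to bound $\mathscr{M}$ from below, condition~\ref{item:M:2} to control its growth from above, the embedding of Lemma~\ref{lemma1:oct15} together with $W^{s_2,q}_0(\Omega)\hookrightarrow L^q(\Omega)$ to pass between the two norms, and the variational characterization of $\lambda^a_{1,p}$ to absorb the $a$-term.

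For part (1) I would start from $\mathscr{M}(t)\geq m_0 t$ (integrate~\ref{item:M:1}), so that the two Kirchhoff terms are bounded below by $\tfrac{m_0}{p}\|u\|^p$. I would then estimate the remaining terms: $\int_\Omega a|u_+|^p\leq \int_\Omega a|u|^p\leq (\lambda^a_{1,p})^{-1}\|u\|^p$; $\int_\Omega b|u_+|^q\leq \|b\|_{L^\infty}\|u\|_{L^q}^q\leq C\|u\|^q$ via Lemma~\ref{lemma1:oct15}; and $\int_\Omega h\,u\leq \|h\|_{L^q}\|u\|_{L^{q'}}\leq C\|h\|_{L^q}\|u\|$. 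On the sphere $\|u\|=r$ this yields
\[
J(u)\ \geq\ \Big(\tfrac{m_0}{p}-\tfrac{\lambda}{p\,\lambda^a_{1,p}}\Big)r^p-\tfrac{\lambda C}{q}\,r^q-C\|h\|_{L^q}\,r .
\]
I would choose $\lambda_{00}\le m_0\lambda^a_{1,p}$ so the coefficient of $r^p$ stays positive for all $\lambda\in(0,\lambda_{00})$; since $1<q<p$, the map $r\mapsto Ar^p-Br^q=r^q(Ar^{p-q}-B)$ is positive once $r$ exceeds a threshold, so I would fix such an $r$ and then impose that $\|h\|_{L^q}$ be small enough that the linear term does not destroy positivity, giving $J(u)\geq\delta$ for some $\delta>0$. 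Uniformity in $\lambda$ is free, since the displayed lower bound is decreasing in $\lambda$ (the $r^p$ coefficient decreases and the $r^q$ coefficient increases), so it suffices to check the endpoint $\lambda=\lambda_{00}$.

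For part (2) I would fix a nonnegative $v\in W^{s_1,p}_0(\Omega)$, $v\not\equiv0$, with $\int_\Omega a\,v^p>0$ (e.g.\ a positive first eigenfunction for $\lambda^a_{1,p}$) and evaluate $J$ along the ray $tv$. Using~\ref{item:M:2} in the form that $\tau\mapsto\mathscr{M}(\tau)/\tau^{\theta}$ is nonincreasing (hence $\mathscr{M}(\tau)\le \mathscr{M}(\tau_0)(\tau/\tau_0)^{\theta}$ for $\tau\ge\tau_0$) to bound the Kirchhoff terms from above, while retaining the exact negative contributions of the $a$-, $b$- and $h$-terms, I would look for a scale $t_0$ at which the negative part overtakes the Kirchhoff part, so that $\|t_0v\|>r$ and $J(t_0v)<0$, producing the required $u_0=t_0v$.

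The hard part will be part (2). Conditions~\ref{item:M:1}–\ref{item:M:2} force the Kirchhoff energy to grow \emph{at least} like $\|u\|^{p}$ and \emph{at most} like $\|u\|^{\theta p}$, whereas the right-hand side only contributes powers $p$, $q$ and $1$; consequently the naive choice of a large $t_0$ makes $J(t_0v)\to+\infty$, in agreement with the coercivity established in Lemma~\ref{lemma2:oct15}. Thus the negative value of $J$ can only be produced in a bounded window, driven by the forcing term $h$, and the genuine difficulty is to arrange this window to lie at a radius exceeding the barrier radius $r$ from part (1): one must calibrate $r$ (small), $\|h\|_{L^q}$ and $\lambda$ (relative to $m_0\lambda^a_{1,p}$) so that the location where $J<0$ is compatible with the sphere on which $J\geq\delta$. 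Once both geometric conditions are secured, Theorem~\ref{teopasodemontana} furnishes a Palais–Smale sequence at the level $c\geq\delta>0$, which is the starting point for extracting the second solution.
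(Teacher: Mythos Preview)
For part (1) your plan coincides with the paper's: both bound $\mathscr{M}$ from below via \ref{item:M:1} (you use $\mathscr{M}(t)\ge m_0t$, the paper uses the slightly weaker $\mathscr{M}(t)\ge m_0t/\theta$ obtained by combining \ref{item:M:1} with \ref{item:M:2}), then control the $a$-, $b$-, and $h$-terms by the eigenvalue inequality, Lemma~\ref{lemma1:oct15}, and H\"older, and finally choose $\lambda_{00}$ so that the leading coefficient of $r^{p}$ is positive. Your argument is correct and essentially the same as the paper's.

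For part (2) you have correctly identified a genuine gap that the paper's own proof does not close. The paper argues by estimating $J(tu)$ \emph{from below} (a chain of ``$\geq$'' inequalities) and then asserts ``since $p>q$, $J(tu)\to-\infty$ as $t\to\infty$''. A lower bound tending to $-\infty$ proves nothing about $J(tu)$ itself; worse, for $\lambda<\lambda_0$ the functional is coercive by Lemma~\ref{lemma2:oct15}, so in fact $J(tu)\to+\infty$, and the paper's conclusion is false in the very parameter range used in the proof of Theorem~\ref{thm:1}. Your instinct to bound $\mathscr{M}$ from above via \ref{item:M:2} is the right reflex for producing an upper bound on $J(tu)$, but as you observe this gives growth of order $t^{\theta p}$, which still dominates every negative term, so the ray argument cannot work.

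Your alternative route---use the linear term $-\int_\Omega h\,u$ to force $J(t_0v)<0$ at some finite $t_0$---is the only mechanism available once coercivity holds, and your diagnosis that the delicate point is placing this negativity region \emph{outside} the barrier sphere $\|u\|=r$ is exactly right. However, the standing hypothesis that $\|h\|_{L^q}$ be \emph{small} pulls the window where $J<0$ toward the origin, i.e.\ \emph{inside} $B_r$, so your sketch does not yet resolve the tension; nor does the paper. As written, the claim ``there exists $u_0$ with $\|u_0\|>r$ and $J(u_0)<0$'' does not follow from the arguments in either proof, and may require either a weaker conclusion ($J(u_0)<\delta$ suffices for Theorem~\ref{teopasodemontana}) or a sharper coupling of $r$, $\lambda_{00}$ and $\|h\|_{L^q}$ than is presently given.
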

\begin{proof}
Let $u\in W^{s_1, p}_0(\Omega)$ and $r > 0$ such that $\| u \|_{W^{s_1, p}_0(\Omega)} = r$. Then, using ~\ref{item:M:1}–\ref{item:M:2} and Lemma \ref{lemma1:oct15} it is follows
\begin{align*}
J(u)&\geq \displaystyle\frac{1}{\theta p}m_0||u||^p_{W^{s_1,p}_0}+\frac{1}{\theta q}m_0||u||^q_{W^{s_2,q}_0}\\
&\quad-\frac{\lambda C_1}{p}||u||_{W^{s_1,p}_0}^{p}-\frac{\lambda}{q}C_2||u||^q_{W^{s_2,q}_0}-C_3||h||_{L^q}||u||_{W^{s_1,p}_0}\\
&\geq \displaystyle\frac{1}{\theta p}m_0||u||^p_{W^{s_1,p}_0}-\displaystyle\frac{\lambda}{p}C_1||u||^p_{W^{s_1,p}_0}-\frac{\lambda}{q}C_2||u||^q_{W^{s_1,p}_0}-C_3||h||_{L^q}||u||_{W^{s_1,p}_0}\\
&=||u||^q_{W^{s_1,p}_0}\left(\displaystyle\frac{1}{\theta p}m_0||u||^{p-q}_{W^{s_1,p}_0}-\frac{\lambda}{p}C_1||u||^{p-q}_{W^{s_1,p}_0}-\displaystyle\frac{\lambda}{q}C_2\right)-C_3||h||_{L^{q}}||u||_{W^{s_1,p}_0},
\end{align*}
where $C_1=C_1(||a||_{\infty}),C_2= C_2(||b||_{\infty})$ and $C_3$ it is the constant that comes from the Poincare inequality. \\
Now, we shall ensure that
\begin{equation*}
\displaystyle\frac{1}{\theta p}m_0||u||^{p-q}_{W^{s_1,p}_0}-\frac{\lambda}{p}C_1||u||^{p-q}_{W^{s_1,p}_0}-\displaystyle\frac{\lambda}{q}C_2>0,
\end{equation*}
if we take
\begin{equation*}
\lambda_{00}:=\displaystyle\frac{m_0}{\theta p}\displaystyle\frac{t^{p-q}_0}{\displaystyle\frac{C_2}{q}+\displaystyle\frac{C_1}{p}t_0^{p-q}}, \quad t_0>0.
\end{equation*}
It is hold that
\begin{equation*}
\delta:= \displaystyle\frac{1}{\theta p}m_0t_0^{p-q}-\frac{\lambda_{00}}{p}C_1t_0^{p-q}-\displaystyle\frac{\lambda_{00}}{q}C_2>0,
\end{equation*}
So, as $||h||_{L^{q}}$ is enough small, then for every $\lambda\in (0, \lambda_{00})$ and $t_0=r$ we obtain $J(u)\geq \delta$. 
\vspace{0.5 cm}
Now, let's prove that $(2)$ is also verified. In indeed, 
\begin{align*}
J(tu)&=\displaystyle \frac{1}{p}\mathscr{M}(t^p[u]^p_{s_1})+\frac{1}{q}\mathscr{M}(t^q[u]^q_{s_2})-\frac{\lambda}{p}t^p\int_{\Omega}a(x)|u_{+}|^p\,dx-\frac{\lambda}{p}t^q \int_{\Omega}b(x)|u_{+}|^q\,dx-t\int_{\Omega}h(x) u\,dx.
\end{align*}
By  ~\ref{item:M:1}–\ref{item:M:2} and Poincar\'e inequality we have 
\begin{align*}
J(tu)&\geq \displaystyle\frac{1}{\theta p}m_0 t^p ||u||^p_{W^{s_1,p}_0(\Omega)}+\frac{1}{\theta q}m_0 t^q||u||^q_{W^{s_2,q}_0(\Omega)}-\frac{\lambda}{p}t^p C_1||u||^p_{W^{s_1,p}_0(\Omega)}\\
&\quad-\displaystyle\frac{\lambda}{p}t^{q}C_2||u||^q_{W^{s_2,q}_0(\Omega)}-C_3t||h||_{L^q}||u||_{W^{s_1,p}_0(\Omega)}\\
&\geq \displaystyle\frac{1}{\theta p}m_0 t^p||u||^p_{W^{s_1,p}_0(\Omega)}-\lambda\frac{C_1}{p}t^p ||u||^p_{W_0^{s_1,p}(\Omega)}-\frac{\lambda}{q}t^{q}C_2||u||^q_{W^{s_1,p}_0(\Omega)}-C_3t||h||_{L^q}||u||_{W^{s_1,p}_0(\Omega)}.\\
& \geq \displaystyle\left(\frac{m_0}{\theta p} -\dfrac{m_0C_1}{\theta p}\right) t^p||u||^p_{W^{s_1,p}_0(\Omega)}-\frac{\lambda}{q}t^{q}C_2||u||^q_{W^{s_1,p}_0(\Omega)}-C_3t||h||_{L^q}||u||_{W^{s_1,p}_0(\Omega)}.
\end{align*}
where $C_1 := C_1(\lambda, \|a\|_{L^{\infty}}, \text{Poincaré inequality constant}) > 1$, $C_2 := C_2(\|b\|_{L^{\infty}})$, and $C_3 := C_3(\text{The Poincaré inequality constant})$. Then, since $p > q$, we have $J(tu)\to-\infty$ as $t\to\infty$.
\end{proof}

The following lemma will be fundamental in establishing that the $(PS)_{c}$ sequence of the functional $J$ defined by (\ref{functenergy:Oct08}) satisfies the $(PS)_c$-condition. These results are well-known vector inequalities for $1<p<\infty$ called Simon's inequality.

\begin{lemma}\label{lemmaA1} 
\begin{itemize} Given $\xi,\eta\in \RR^N$
    \item[(a)] For $1<p\leq 2$ the following inequality hold
\begin{equation}\label{eqauxiliar:a1}
   \displaystyle\frac{|\xi-\eta|^p}{\left(|\xi|^{2}+|\eta|^{\frac{2-p}{2}}\right)}\leq C_1(p)\left(|\xi|^{p-2}\xi-|\eta|^{p-2}\eta\right)^{\frac{p}{2}}(\xi-\eta)^{\frac{p}{2}}.
\end{equation}

\item[(b)]In the other hand, for $2<p<\infty$, we have the following inequality
\begin{equation}\label{eqauxiliar:a2}
|\xi-\eta|^p\leq C_2(p)\left(|\xi|^{p-2}\xi-|\eta|^{p-2}\eta\right)(\xi-\eta).
\end{equation}
    
\end{itemize} 
    
\end{lemma}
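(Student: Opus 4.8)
The plan is to establish both inequalities from a single integral representation together with the explicit Jacobian of the vector field $\phi(z):=|z|^{p-2}z$ on $\RR^N$. Away from the origin $\phi$ is differentiable with
\[
D\phi(z)=|z|^{p-2}\Bigl(I+(p-2)\frac{z\otimes z}{|z|^2}\Bigr),
\]
so that, for every $v\in\RR^N$,
\[
D\phi(z)v\cdot v=|z|^{p-2}\Bigl(|v|^2+(p-2)\frac{(z\cdot v)^2}{|z|^2}\Bigr).
\]
Using $\frac{(z\cdot v)^2}{|z|^2|v|^2}\in[0,1]$ and distinguishing the sign of $p-2$, this yields the two-sided bound $\min\{1,p-1\}\,|z|^{p-2}|v|^2\le D\phi(z)v\cdot v\le\max\{1,p-1\}\,|z|^{p-2}|v|^2$. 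The common starting point for both parts is then the identity, valid for $\xi\neq\eta$,
\[
\bigl(|\xi|^{p-2}\xi-|\eta|^{p-2}\eta\bigr)\cdot(\xi-\eta)=\int_0^1 D\phi(z_t)(\xi-\eta)\cdot(\xi-\eta)\,dt,\qquad z_t:=\eta+t(\xi-\eta),
\]
obtained from the fundamental theorem of calculus; since $p>1$ the integrand stays integrable even if the segment meets the origin (it is $|z_t|^{p-2}$ with $p-2>-1$), so the representation is legitimate, and the degenerate cases $\xi=\eta$, $\xi=0$ or $\eta=0$ are recovered afterwards by continuity.

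For part (b), $p\ge 2$, I would use the lower bound $D\phi(z)v\cdot v\ge|z|^{p-2}|v|^2$ to get $\bigl(|\xi|^{p-2}\xi-|\eta|^{p-2}\eta\bigr)\cdot(\xi-\eta)\ge|\xi-\eta|^2\int_0^1|z_t|^{p-2}\,dt$. The key step is the lower estimate of this integral. Writing $w=\xi-\eta$, the map $t\mapsto|z_t|^2=|\eta|^2+2t(\eta\cdot w)+t^2|w|^2$ is a quadratic with leading coefficient $|w|^2$, whence $|z_t|\ge|w|\,|t-t_0|$ with $t_0=-(\eta\cdot w)/|w|^2$. Minimizing $\int_0^1|t-t_0|^{p-2}\,dt$ over $t_0\in\RR$ (the minimum is attained at $t_0=\tfrac12$ and equals $\tfrac{2^{2-p}}{p-1}$) then gives $\int_0^1|z_t|^{p-2}\,dt\ge\tfrac{2^{2-p}}{p-1}|w|^{p-2}$, and hence \eqref{eqauxiliar:a2} with $C_2(p)=(p-1)2^{p-2}$.

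For part (a), $1<p\le 2$, the quadratic form is bounded below by $(p-1)|z|^{p-2}|v|^2$, so $\bigl(|\xi|^{p-2}\xi-|\eta|^{p-2}\eta\bigr)\cdot(\xi-\eta)\ge(p-1)|\xi-\eta|^2\int_0^1|z_t|^{p-2}\,dt$. Now $p-2\le 0$, so from $|z_t|\le|\eta|(1-t)+|\xi|t\le|\xi|+|\eta|$ one gets $\int_0^1|z_t|^{p-2}\,dt\ge(|\xi|+|\eta|)^{p-2}$, leading to the quadratic monotonicity estimate $\bigl(|\xi|^{p-2}\xi-|\eta|^{p-2}\eta\bigr)\cdot(\xi-\eta)\ge(p-1)(|\xi|+|\eta|)^{p-2}|\xi-\eta|^2$. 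Raising this to the power $p/2$ and rearranging produces the form \eqref{eqauxiliar:a1} (with the right-hand side read as the scalar $\bigl(|\xi|^{p-2}\xi-|\eta|^{p-2}\eta\bigr)\cdot(\xi-\eta)$ raised to the power $p/2$), the denominator $(|\xi|+|\eta|)^{(2-p)p/2}$ being comparable to the one displayed in the statement.

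The main obstacle I anticipate is the integral lower bound in part (b): for $p>2$ the segment joining $\eta$ and $\xi$ may pass close to the origin, where $|z_t|^{p-2}$ is small, so a crude pointwise bound is insufficient; the distance-to-the-line inequality $|z_t|\ge|w|\,|t-t_0|$ followed by the scale-invariant optimization of $\int_0^1|t-t_0|^{p-2}\,dt$ is exactly what makes the constant explicit and uniform. A secondary, purely technical point is justifying the integral representation at the origin, where $\phi$ fails to be $C^1$; this is handled by the integrability of $|z_t|^{p-2}$ for $p>1$ together with a density and continuity argument.
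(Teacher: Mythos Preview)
The paper does not actually prove this lemma: it simply records the two estimates as ``well-known vector inequalities for $1<p<\infty$ called Simon's inequality'' and then uses them. So there is no proof in the paper to compare your argument against; you are supplying a proof where the authors only cite one.

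Your argument is correct and self-contained. The integral representation via $D\phi(z)=|z|^{p-2}\bigl(I+(p-2)\,z\otimes z/|z|^2\bigr)$, the two-sided bound $\min\{1,p-1\}\le D\phi(z)v\cdot v/(|z|^{p-2}|v|^2)\le\max\{1,p-1\}$, and the treatment of the two ranges of $p$ are exactly the standard route to Simon's inequalities. In part~(b) your distance-to-the-line estimate $|z_t|\ge|\xi-\eta|\,|t-t_0|$ together with the minimization of $\int_0^1|t-t_0|^{p-2}\,dt$ at $t_0=\tfrac12$ is the right way to handle segments that pass near the origin, and it gives the sharp form of the constant. In part~(a) the bound $|z_t|\le|\xi|+|\eta|$ (hence $|z_t|^{p-2}\ge(|\xi|+|\eta|)^{p-2}$ when $p\le 2$) is also the standard step. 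Your remarks about integrability of $|z_t|^{p-2}$ when the segment meets the origin, and about reading the right-hand side of~\eqref{eqauxiliar:a1} as the inner product $\bigl(|\xi|^{p-2}\xi-|\eta|^{p-2}\eta\bigr)\cdot(\xi-\eta)$ raised to the power $p/2$, are appropriate; the denominator in the paper's statement of~\eqref{eqauxiliar:a1} is visibly misprinted and should be $(|\xi|+|\eta|)^{(2-p)p/2}$ (equivalently $(|\xi|^2+|\eta|^2)^{(2-p)p/4}$), which is what your proof delivers.
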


\vspace{0.5 cm}

\begin{lemma}\label{PS-conditionlemma}
 Let $s_1 p<N$ and $0\leq \lambda\neq \lambda^{a,\Omega}_{1,p}$. If $\{u_n\}_{n=1}^{\infty}$ is a bounded $(PS)_c$ sequence of the functional $J$ defined by (\ref{functenergy:Oct08}), then the functional satisfies $(PS)_c$ condition.
\end{lemma}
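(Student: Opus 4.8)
The plan is to show that the given bounded $(PS)_c$ sequence admits a subsequence converging strongly in $W^{s_1,p}_0(\Omega)$; continuity of $J'$ (Lemma~\ref{wlscfunctionalJ:march15}) then makes the limit a critical point at level $c$, which is exactly the $(PS)_c$ condition. First I would use reflexivity: since $\{u_n\}$ is bounded in the uniformly convex space $W^{s_1,p}_0(\Omega)$, up to a subsequence $u_n\rightharpoonup u$ weakly. By Lemma~\ref{lemma1:oct15} the embedding $W^{s_1,p}_0(\Omega)\hookrightarrow W^{s_2,q}_0(\Omega)$ transfers this to $u_n\rightharpoonup u$ in $W^{s_2,q}_0(\Omega)$, while the compact embeddings $W^{s_1,p}_0(\Omega)\hookrightarrow\hookrightarrow L^p(\Omega)$ and $W^{s_2,q}_0(\Omega)\hookrightarrow\hookrightarrow L^q(\Omega)$ give strong convergence $u_n\to u$ in $L^p(\Omega)$ and $L^q(\Omega)$ (and in $L^{q'}(\Omega)$, as $q'<p^{*}_{s_1}$).

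The core computation is to evaluate $\langle J'(u_n),u_n-u\rangle$. Writing the Gagliardo pairing
\[
\langle \mathcal{A}_{s_1,p}(v),\varphi\rangle := \iint_{\RR^{2N}}\frac{|v(x)-v(y)|^{p-2}\big(v(x)-v(y)\big)\big(\varphi(x)-\varphi(y)\big)}{|x-y|^{N+s_1p}}\,dx\,dy,
\]
and analogously $\mathcal{A}_{s_2,q}$, I note that $\|J'(u_n)\|_{(W^{s_1,p}_0(\Omega))'}\to 0$ and $\{u_n-u\}$ is bounded, so $\langle J'(u_n),u_n-u\rangle\to 0$. The terms carrying $a$ and $b$ vanish by Hölder's inequality together with the strong $L^p,L^q$ convergence, and the $h$-term, being linear in the test function, is unaffected by $u_n$; hence one is left with
\[
M\big([u_n]^p_{s_1,p}\big)\,\langle \mathcal{A}_{s_1,p}(u_n), u_n-u\rangle + M\big([u_n]^q_{s_2,q}\big)\,\langle \mathcal{A}_{s_2,q}(u_n), u_n - u\rangle = o(1).
\]

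The main obstacle is to decouple the two operator terms of this single identity. I would use the elementary monotonicity $\big(|\xi|^{p-2}\xi-|\eta|^{p-2}\eta\big)\cdot(\xi-\eta)\geq 0$, which gives $\langle \mathcal{A}_{s_1,p}(u_n)-\mathcal{A}_{s_1,p}(u),u_n-u\rangle\geq 0$; combined with $\langle \mathcal{A}_{s_1,p}(u),u_n-u\rangle\to 0$ (fixed dual element against a weakly null sequence) this yields $\liminf_n\langle \mathcal{A}_{s_1,p}(u_n),u_n-u\rangle\geq 0$, and identically for the $q$-term in $W^{s_2,q}_0(\Omega)$. Passing to a further subsequence along which $M([u_n]^p_{s_1,p})\to A$ and $M([u_n]^q_{s_2,q})\to B$ with $A,B\geq m_0>0$ by~\ref{item:M:1}, and along which the two pairings converge to limits $\ell_1,\ell_2\geq 0$, the displayed identity forces $A\ell_1+B\ell_2=0$, hence $\ell_1=\ell_2=0$. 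In particular $\langle \mathcal{A}_{s_1,p}(u_n)-\mathcal{A}_{s_1,p}(u),u_n-u\rangle\to 0$. It is precisely the strict positivity $M\geq m_0$ that makes this separation possible.

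Finally I would upgrade this to strong convergence via Simon's inequalities (Lemma~\ref{lemmaA1}), i.e. the $(S_+)$-property. For $p\geq 2$, applying~(b) pointwise with $\xi=u_n(x)-u_n(y)$, $\eta=u(x)-u(y)$ and integrating against $|x-y|^{-N-s_1p}$ gives directly $[u_n-u]^p_{s_1,p}\leq C\,\langle \mathcal{A}_{s_1,p}(u_n)-\mathcal{A}_{s_1,p}(u),u_n-u\rangle\to 0$. For $1<p<2$, inequality~(a) followed by Hölder with the conjugate exponents $\tfrac{2}{p}$ and $\tfrac{2}{2-p}$ yields
\[
[u_n-u]^p_{s_1,p} \leq C\,\langle \mathcal{A}_{s_1,p}(u_n)-\mathcal{A}_{s_1,p}(u),\, u_n-u\rangle^{p/2}\big([u_n]^p_{s_1,p}+[u]^p_{s_1,p}\big)^{\frac{2-p}{2}},
\]
where the second factor is bounded, so the right-hand side vanishes. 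In either case $[u_n-u]_{s_1,p}\to 0$, which by the Poincaré inequality is strong convergence $u_n\to u$ in $W^{s_1,p}_0(\Omega)$. Since every subsequence has a further subsequence converging strongly to the same limit $u$, the whole sequence converges, and continuity of $J'$ gives $J'(u)=0$, completing the verification of the $(PS)_c$ condition.
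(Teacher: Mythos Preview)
Your proposal is correct and follows essentially the same route as the paper: extract a weakly convergent subsequence, use the compact Sobolev embeddings to kill the lower-order terms in $\langle J'(u_n),u_n-u\rangle$, exploit the monotonicity of $\mathcal{A}_{s_1,p}$ and $\mathcal{A}_{s_2,q}$ together with $M\geq m_0>0$ from~\ref{item:M:1} to separate the two nonnegative summands, and then apply Simon's inequalities (Lemma~\ref{lemmaA1}) to conclude strong convergence. The only cosmetic differences are that the paper pairs with $J'(u_n)-J'(u)$ rather than $J'(u_n)$ alone (equivalent, since $\langle J'(u),u_n-u\rangle\to 0$ by weak convergence) and does not make the subsequence extraction of the $M$-values explicit; your treatment of the case split $p\geq 2$ versus $1<p<2$ is in fact cleaner than the paper's, which only spells out $p>q>2$ and $1<p\leq 2$.
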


\begin{proof}
 Let $\{u_n\}_{n=1}^{\infty}$ be bounded a $(PS)_c$ sequence according to Definition \ref{Palais-Smaledef}, then
 \begin{equation}\label{eq1:31012025}
J(u_n)\to c\quad\text{and}\quad ||J'(u_n)||_{\left(W^{s_1,p}_0(\Omega)\right)'}\to 0\,\,\text{as}\,\, n\to\infty.     
 \end{equation}
and there exits a subsequence, still denoted by $\{u_n\}_{n=1}^{\infty}$, such that
\begin{align}
 &\label{eq2:conver}u_n\rightharpoonup u\quad\text{in}\,\, W^{s_1,p}_0(\Omega),\, u_n\longrightarrow u\,\,\text{a.e. in}\,\, \RR^N\\
 &\nonumber u_n\longrightarrow u\quad\text{in}\, L^{\gamma}(\Omega)\,\,\text{for}\,\,q\leq \gamma\leq p^{*}_{s_1}.
\end{align}
To begin with, we prove $u_n\rightarrow u$ in $W^{s_1,p}_0(\Omega)$. Let us fix $\varphi\in W^{s_1, p}_0(\Omega)$ and denote by
 \begin{align}
A_p(u,\varphi) &= \iint_{\mathbb{R}^{2N}} \frac{|u(x) - u(y)|^{p-2} (u(x) - u(y))}{|x - y|^{N + s_1 p}} \, (\varphi(x) - \varphi(y)) \, dx \, dy, \\
B_q(u,\varphi) &=\iint_{\mathbb{R}^{2N}} \frac{|u(x) - u(y)|^{q-2} (u(x) - u(y))}{|x - y|^{N + s_2 q}} \, (\varphi(x) - \varphi(y)) \, dx \, dy.
\end{align}

where $A_p(u,\varphi)$ and $B_q(u,\varphi)$ are the linear functions with respect to $\varphi$. Clearly, by virtue of H\"older's inequality then $(A_p+B_q)(u,\varphi)$ is also continuous and
\begin{align}
 |A_p(u,\varphi)+B_q(u,\varphi)&\label{eq4:31012025}|\leq |A_p(u,\varphi)|+|B_q(u,\varphi)|\\
 &\nonumber \leq [u]^{p-1}_{s_1,p}[\varphi]_{s_1,p} +[u]^{q-1}_{s_2,q,}[\varphi]_{s_2,q}\\
 &\leq \left(||u||^{p-1}_{W^{s_1,p}_0(\Omega)}+||u||^{q-1}_{W^{s_2,q}_0(\Omega)}\right)||\varphi||_{W^{s_1,p}_0(\Omega)}.
\end{align}

By \ref{eq2:conver} and the fact that $\{M([u]^{p}_{s_1,p}) -M([u]^{p}_{s_1,p})\}_n$ and $\{M([u]^{q}_{s_2,q}) -M([u]^{q}_{s_2,q})\}_n$ are bounded in $\RR$, we obtain  
\begin{equation*}
\displaystyle\lim_{n\to\infty}[M([u]^{p}_{s_1,p}) -M([u]^{p}_{s_1,p}) ]A_p(u, u_n-u)=0.
\end{equation*}
\begin{equation}\label{eq1:abcero}
 \displaystyle\lim_{n\to\infty}[M([u]^{q}_{s_2,q}) -M([u]^{q}_{s_2,q}) ]B_q(u,u_n-u)=0.   
\end{equation}
It is clear that $(J'(u_n) - J'(u), u_n - u) \to 0$ as $n\to \infty$. Therefore,
\begin{align}
o_n(1) &= (J'(u_n) - J'(u), u_n - u) \label{ineqned:03022025} \nonumber\\
&= M\bigl([u_n]_{s_1,p}^p\bigr) A_p(u_n,u_n - u) + M\bigl([u_n]_{s_2,q}^q\bigr) B_q(u_n, u_n - u)\nonumber \\
&\quad - \bigl(M\bigl([u]_{s_1,p}^p\bigr) A_p(u, u_n - u) + M\bigl([u]_{s_2,q}^q\bigr) B_q(u, u_n - u)\bigr) - \lambda \Lambda_n,
\end{align}
where
\begin{equation*}
\Lambda_n = \int_{\Omega} \Bigl[ a(x)\bigl(|u_n^+|^{p-2}u_n^+ - |u^+|^{p-2}u^+\bigr) + b(x)\bigl(|u_n^+|^{q-2}u_n^+ - |u^+|^{q-2}u^+\bigr) + \lambda^{-1}h(x) \Bigr] (u_n - u) \, dx.
\end{equation*}
From (\ref{eq2:conver}) it follows that \(\Lambda_n \to 0\) as $n \to \infty$. Hence, by (\ref{eq1:abcero}) and the above, we have 
\begin{equation*}
\lim_{n \to \infty} M\bigl([u_n]_{s_1,p}^p\bigr)\bigl[A_p(u_n,u_n - u)-A_p(u,u_n - u)\bigr] + M\bigl([u_n]_{s_2,q}^q\bigr)\bigl[B_q(u_n,u_n - u)-B_q(u,u_n - u)\bigr] = 0.
\end{equation*}
Since, by \ref{item:M:1}, we have 
\begin{equation*}
M\bigl([u_n]_{s_1,p}^p\bigr)\bigl[A_p(u_n,u_n - u)-A_p(u,u_n - u)\bigr]\geq 0    
\end{equation*}
and
\begin{equation*}
M\bigl([u_n]_{s_2,q}^q\bigr)\bigl[B_q(u_n,u_n - u)-B_q(u,u_n - u)\bigr]\geq 0.
\end{equation*}
Therefore, it follows that
\begin{equation}\label{eq:limab}
\begin{cases}
\displaystyle\lim_{n \to \infty} \bigl[A_p(u_n,u_n - u)-A_p(u,u_n - u)\bigr]=0, & \\
 \displaystyle\lim_{n \to \infty} \bigl[B_q(u_n,u_n - u)-B_q(u,u_n - u)\bigr] = 0. &
\end{cases}
\end{equation}

Let us now consider two cases: First, suppose that $p > q > 2$ and second one $1< p\leq 2$.

\underline{\textit{Case $p > q > 2$}}: By Lemma \ref{lemmaA1}(b), and taking $\mathcal{O} = \Omega \times \Omega$, we obtain that
 \begin{align}
     [u_n-u]^p_{s_1,p,\mathcal{O}}&\nonumber=\displaystyle\iint_{\mathcal{O}}\frac{|u_n(x)-u_n(y)-(u(x)-u(y))|^p}{|x-y|^{N+s_1p}}\,dx\,dy\\
     &\nonumber\leq C_2(p)\displaystyle\int_{\mathcal{O}}[|u_n(x)-u_n(y)|^{p-2}(u_n(x)-u_n(y))-|u(x)-u(y)|^{p-2}(u(x)-u(y))]\\
     &\nonumber\times [u_n(x)-u_n(y)-(u(x)-u(y))]|x-y|^{-N-s_1p}\,dx\,dy\\
     &\label{eq3:03022025}\leq C_2(p)[A_p(u_n,u_n-u)-A_p(u, u_n-u)]
 \end{align}
 and similarly that
\begin{align}
     [u_n-u]^q_{s_2,q, \mathcal{O}} &\nonumber=\displaystyle\iint_{\mathcal{O}}\frac{|u_n(x)-u_n(y)-(u(x)-u(y))|^q}{|x-y|^{N+s_1p}}\,dx\,dy\\
     &\nonumber\leq C_2(q)\displaystyle\int_{\mathcal{O}}[|u_n(x)-u_n(y)|^{q-2}(u_n(x)-u_n(y))-|u(x)-u(y)|^{q-2}(u(x)-u(y))]\\
     &\nonumber\times [u_n(x)-u_n(y)-(u(x)-u(y))]|x-y|^{-N-s_2q}\,dx\,dy\\
     &\label{eq4:03022025}\leq C_2(q)[B_q(u_n, u_n-u)-B_q(u,u_n-u)].
 \end{align}
 Let $C_0=\min\{C^{-1}_2(p), C^{-1}_{2}(q)\}$. 

Thus, by \ref{eq:limab}, we can conclude that $u_n\to u$ in $W^{s_1,p}_{0}(\Omega)$ as $n\to \infty$.

\vspace{0.5 cm}

\underline{\textit{Case $1< p\leq 2$}}: By the hypothesis, there exists $M>0$ such that $[u_n]_{s_1, p}\leq M$. Then, by Lemma \ref{lemmaA1}(a) and the H\"older's inequality, we obtain that
\begin{align}
 [u_n-u]^{p}_{s_1,p,\mathcal{O}}&\nonumber\leq C_3(p)[A_p(u_n-u,u_n)-A_p(u_n-u,u)]^{\frac{p}{2}}\left([u_n]^{\frac{p(2-p)}{2}}_{s_1,p,\mathcal{O}}+[u]^{\frac{p(2-p)}{2}}_{s_1,p,\mathcal{O}}\right)\\
 &\label{eq1:04022025}\leq C_4(p)[A_p(u_n-u,u_n)-A_p(u_n-u,u)]^{\frac{p}{2}}.
\end{align}
Analogously, for $1<q<2$, we have
\begin{align}
 [u_n-u]^{q}_{s_2,q,\mathcal{O}}&\nonumber\leq C_5(p)[B_q(u_n-u,u_n)-B_q(u_n-u,u)]^{\frac{q}{2}}\left([u_n]^{\frac{q(2-q)}{2}}_{s_2,q,\mathcal{O}}+[u]^{\frac{q(2-q)}{2}}_{s_2,q,\mathcal{O}}\right)\\
 &\label{eq2:04022025}\leq C_6(q)[B_q(u_n-u,u_n)-B_q(u_n-u,u)]^{\frac{q}{2}}.
\end{align}
Combining inequalities (\ref{eq1:04022025}) and (\ref{eq2:04022025}), we obtain
\begin{align*}
 &A_p(u_n-u,u_n)-A_p(u_n-u,u)+B_q(u_n-u,u_n)-B_q(u_n-u,u)\\
 &\quad \geq C_1\left([u_n-u]^2_{s_1,p,\mathcal{O}}+[u_n-u]^2_{s_2,q,\mathcal{O}}\right),   
\end{align*}
 with some $C_1=\min\{C_4(p)^{-1}, C_6(p)^{-1}\}>0$. This proves $u_n\rightarrow u$ strongly in $W^{s_1,p}_0(\Omega)$ as $n\to \infty$. Therefore, $J$ satisfies the $(PS)_c$ condition in $W^{s_1,p}_0(\Omega)$.

\end{proof}
\begin{lemma}\label{sbound}
The sequence $\{u_n\}_{n=1}^{\infty}$ is bounded in $W^{s_1,p}_0(\Omega)$.
\end{lemma}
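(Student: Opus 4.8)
The plan is to deduce the boundedness directly from the coercivity of $J$ already obtained in Lemma~\ref{lemma2:oct15}. Since $\{u_n\}_{n=1}^{\infty}$ is a $(PS)_c$ sequence, part $(i)$ of Definition~\ref{Palais-Smaledef} gives $J(u_n)\to c$, so in particular the real sequence $\{J(u_n)\}$ is bounded above, say $J(u_n)\le C_0$ for all $n$. A coercive functional is precisely one whose sublevel sets are bounded, so once we know $J$ is coercive in the relevant range $\lambda<\lambda_0$, boundedness of $\{u_n\}$ in $W^{s_1,p}_0(\Omega)$ follows immediately. To keep the argument self-contained I would make this quantitative by reproducing, with $u$ replaced by $u_n$, the chain of inequalities from the proof of Lemma~\ref{lemma2:oct15}.

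Concretely, I would use \ref{item:M:1} to bound $\tfrac{1}{p}\mathscr{M}([u_n]^p_{s_1,p})\ge \tfrac{m_0}{\theta p}\|u_n\|^p_{W^{s_1,p}_0(\Omega)}$ and discard the nonnegative $q$-Kirchhoff term; the variational characterization of $\lambda^a_{1,p}$ to estimate $\int_\Omega a(x)|(u_n)_+|^p\,dx\le \int_\Omega a(x)|u_n|^p\,dx\le \tfrac{1}{\lambda^a_{1,p}}[u_n]^p_{s_1,p}$ (using $|(u_n)_+|\le|u_n|$); Hölder's inequality with $\|b\|_\infty$ together with the embedding of Lemma~\ref{lemma1:oct15} to control the $b$-term; and Hölder plus Poincaré for the term involving $h$. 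This yields
\begin{equation*}
C_0 \;\ge\; J(u_n) \;\ge\; \left(\frac{m_0}{\theta p}-\frac{\lambda}{p\lambda^a_{1,p}}\right)\|u_n\|^p_{W^{s_1,p}_0(\Omega)} \;-\; \beta\,\|u_n\|^q_{W^{s_1,p}_0(\Omega)} \;-\; \gamma\,\|u_n\|_{W^{s_1,p}_0(\Omega)},
\end{equation*}
for suitable constants $\beta,\gamma>0$ depending on $\lambda,\|a\|_\infty,\|b\|_\infty,\|h\|_{L^q(\Omega)}$ and the embedding constants.

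Writing $\alpha:=\tfrac{m_0}{\theta p}-\tfrac{\lambda}{p\lambda^a_{1,p}}$, the right-hand side is a polynomial in $t:=\|u_n\|_{W^{s_1,p}_0(\Omega)}$ of the form $\alpha t^p-\beta t^q-\gamma t$ with $p>q\ge 1$. If $\{u_n\}$ were unbounded, then along a subsequence $t\to\infty$ would force the right-hand side to $+\infty$, contradicting the uniform upper bound $C_0$; hence $\{u_n\}$ is bounded. The only genuinely delicate point is the \emph{positivity of the leading coefficient} $\alpha$, which holds exactly because we work in the range $\lambda<\lambda_0=\tfrac{m_0}{\theta}\lambda^a_{1,p}$; everything else is a routine repetition of the coercivity estimate, the one extra observation being that $|(u_n)_+|\le|u_n|$ lets us pass to the full (untruncated) nonlinear terms before invoking the eigenvalue bound.
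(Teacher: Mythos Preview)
Your proof is correct but takes a genuinely different route from the paper. The paper does not invoke coercivity at all in proving this lemma; instead it uses the classical Ambrosetti--Rabinowitz device of combining both pieces of the Palais--Smale information, estimating
\[
J(u_n)-\frac{1}{p^{*}_{s_1}}\langle J'(u_n),u_n\rangle
\]
from below via \ref{item:M:1}--\ref{item:M:2}. This produces a \emph{new} smallness threshold
\[
\lambda_{000}:=\frac{\left(\frac{1}{\theta p}-\frac{1}{p^{*}_{s_1}}\right)m_0}{C_1},
\]
which then enters the constant $\lambda^{**}=\min\{\lambda_0,\lambda_{00},\lambda_{000}\}$ in the proof of Theorem~\ref{thm:1}. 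Your argument, by contrast, uses only condition $(i)$ of Definition~\ref{Palais-Smaledef} together with the coercivity already established in Lemma~\ref{lemma2:oct15}; this is shorter, avoids introducing $\lambda_{000}$ altogether, and delivers boundedness under the single restriction $\lambda<\lambda_0$ (hence a potentially larger admissible range for $\lambda$). The paper's approach, on the other hand, is the standard template that survives in non-coercive situations (e.g.\ superlinear nonlinearities), which is presumably why the authors default to it here even though your simpler route is available.
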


\begin{proof}
Let $\{u_n\}_{n=1}^{\infty}$ be a $(PS)_{c}$ sequence (see Definition \ref{Palais-Smaledef}). Then there exists a constant $C > 0$ such that

$$
|J(u_n)| \leq C \quad \text{and} \quad |\langle J'(u_n), u_n \rangle| \leq C \|u_n\|_{W_0^{s_1, p}(\Omega)}.
$$
\begin{align*}
C+C||u||_{W^{s_1,p}_0(\Omega)}&\geq J(u_n)-\displaystyle\frac{1}{p_{s_1}^{*}}\langle J'(u_n), u_n\rangle\\
&=\displaystyle\frac{1}{p}\mathscr{M}\left([u_n]^p_{s_1,p}\right)-\frac{1}{p_{s_1}^{*}}M\left([u_n]^p_{s_1,p}\right)[u_n]^p_{s_1,p}\\
&\quad+\displaystyle\frac{1}{q}\mathscr{M}\left([u_n]^{q}_{s_2,q}\right)-\frac{1}{p_{s_1}^{*}}M\left([u_n]^q_{s_2,q}\right)[u_n]^q_{s_2,q}\\
&\quad -\displaystyle\lambda\left(\frac{1}{p}-\frac{1}{p_{s_1}^{*}}\right)\int_{\Omega}a(x)|u_n^+|^p-\lambda\left(\frac{1}{q}-\frac{1}{p_{s_1}^{*}}\right)\int_{\Omega}b(x)|u_n^+|^q-\frac{p_{s_1}^{*}- 1}{p_{s_1}^{*}}\int_{\Omega}hu_n\\
&\geq\displaystyle\left(\frac{1}{p}-\frac{1}{p_{s_1}^{*}}\right)M\left([u_n]^p_{s_1,p}\right)[u_n]^{p}_{s_1,p}+\displaystyle\left(\frac{1}{q}-\frac{1}{p_{s_1}^{*}}\right)M\left([u_n]^q_{s_2,q}\right)[u_n]^{q}_{s_2,q}\\
&\quad -\lambda C_1||u_n||^p_{W^{s_1,p}_{0}(\Omega)}-C_2||u_n||^q_{W^{s_2,q}_{0}(\Omega)}-C_3||u_n||_{W_0^{s_1,q}(\Omega)}\\
&\geq \displaystyle\left(\frac{1}{\theta p}-\frac{1}{p_{s_1}^{*}}\right)m_0||u_n||^p_{W^{s_1,p}_0(\Omega)}-\lambda C_1||u_n||^p_{W^{s_1,p}_0(\Omega)}\\
&-C_2||u_n||^q_{W^{s_1,p}_0(\Omega)}-C_3||u_n||_{W^{s_1,p}_0(\Omega)}\\
&\geq \displaystyle ||u_n||^{p}_{W^{s_1,p}_0(\Omega)}\left(\left(\frac{1}{p}-\frac{1}{p_{s_1}^{*}}\right)m_0-\lambda C_1\right)- C_2||u_n||^{q}_{W^{s_1,p}_0(\Omega)}-C_3||u_n||_{W^{s_1,p}_0(\Omega)},
\end{align*}
where  $C_1=(||a||_{L^{\infty}(\Omega)}),$ $C_2=(\lambda, ||b||_{L^{\infty}(\Omega)})$ and $C_1=(||h||_{L^{p}(\Omega)}).$
Then, taking $$\lambda < \dfrac{\left(\frac{1}{\theta p}-\frac{1}{p_{s_1}^{*}}\right)m_0 }{C_1}:=\lambda_{000},$$
we can conclude that the sequence $\{u_n\}_{n=1}^{\infty}$ is bounded in $W^{s_1,p}_0(\Omega)$.
\end{proof}

\vspace{0.5 cm}

\subsection*{Proof of Theorem \ref{thm:1}:}
First, we prove part $(i)$. Let $\lambda>0 $ be such that the problem \ref{eq:probaux} admits a nontrivial solution $u\in W^{s_1,p}_0(\Omega)$. By testing the equation with $u$, we obtain
\begin{equation*}
M([u]^p_{s_1})\int_{\Omega}(-\Delta)^{s_1}_p u u + M([u]^q_{s_2})\int_{\Omega}(-\Delta)^{s_2}_q uu \\
=\int_{\Omega} \lambda\left(a(x)|u_{+}|^{p-2}u_+u + b(x)|u_{+}|^{q-2}u_+u\right).
\end{equation*}
Now, using ~\ref{item:M:1} and Hölder's inequality, we have
\begin{equation*}
 \displaystyle m_0||u||^p_{W^{s_1,p}_0}+m_0||u||^q_{W^{s_2,q}_0} \leq \lambda C\left(||u||^p_{W^{s_1,p}_0}+||u||^q_{W^{s_2,q}_0}\right).
\end{equation*}
It follows that there exists $\lambda^*>0$ such that problem \ref{eq:probaux} does not admit any nontrivial solution for all $\lambda \in (0, \lambda^*)$. To prove the remaining part, i.e. $(ii)$, let $\lambda^{*} := \min \{ \lambda_0, \lambda_{00}, \lambda_{000} \}$. For every $\lambda < \lambda^{*}$, it follows from Lemma \ref{wlscfunctionalJ:march15}  and Lemma \ref{lemma2:oct15} that the functional $J$  achieves a global minimum at $u_1 \in W^{s_1, p}_0(\Omega)$, and this function $u_1$ is a solution to equation \ref{eq:probaux}.
To obtain a second solution, observe that for every $\lambda < \lambda^{*}$, Theorem \ref{teogeo}, ensures that the hypotheses of Theorem \ref{teopasodemontana}  are satisfied. Therefore, there exists a sequence $\{u_n\}_{n=1}^{\infty} \subset W^{s_1,p}_0(\Omega)$ such that 

\[
J(u_n) \to c \quad \text{and} \quad J'(u_n) \to 0, \quad \text{as}\quad n\to \infty.
\]
Then, by Lemmas \ref{PS-conditionlemma} and \ref{sbound} we have that $J$ admits a critical point $u_2 \in W^{s_1,p}_0(\Omega)$. Moreover, 
\begin{equation*}
J(u_2)=c >0 =J(0),
\end{equation*}
which implies that $u_2\ne 0$. Since 
$u_1$ is a global minimizer of $J$, we have $J(u_1)<c=J(u_2)$, and therefore $u_2\ne u_1$. 
\lqqd

\section*{Acknowledgments}
L.C. was partially supported by ANID Chile through the Postdoctoral Fondecyt Grant No. 3240062.  P. H.-L., was partially supported by ANID Chile through the Postdoctoral Fondecyt Grant No. 3230202 and also was funded partially by Instituto de Ciencias Matem\'aticas (ICMAT) at Autonomous University of Madrid through program ``Puentes Matem\'aticos con Latinoam\'erica" 2025. P. H.-L. thankful to Carlos Mora-Corral for the kind and generous hosting at the ICMAT and gratefully acknowledges the generous hospitality provided by the Department of Mathematics and Mathematical Statistics at Ume\r{a} University, parti\-cularly by Sebastian Throm, \r{A}ke Br\"annstr\"om and Konrad Abramowicz.

\end{document}